\documentclass[12pt]{article}
\usepackage[amsmath]{e-jc}
\usepackage[T1]{fontenc}
\usepackage{lmodern}
\usepackage{mathtools}
\usepackage{microtype,booktabs,array,enumitem,needspace,xcolor,graphicx,placeins}
\hypersetup{pdftitle={Fractional majority coloring of digraphs: bounds and inapproximability},pdfauthor={Walid Ben-Ameur and Alessandro Maddaloni},pdfsubject={Fractional majority coloring, cycle factors, tournaments, and computational complexity}}
\MSC{05C15, 05C20, 68Q25}
\Copyright{The authors.}
\makeatletter
\def\the@dateline{\copy@right}
\def\ps@plain{\def\@oddfoot{\hfil\thepage\hfil}\let\@evenfoot\@oddfoot}
\makeatother
\DeclareMathOperator{\Bin}{Bin}
\newcommand{\Pp}{\mathbb P}
\newcommand{\E}{\mathbb E}
\newcommand{\chif}{\chi^*_{\mathrm{maj}}}
\newcommand{\floor}[1]{\lfloor #1\rfloor}
\newcommand{\abs}[1]{|#1|}
\title{Fractional majority coloring of digraphs:\\bounds and inapproximability}
\author{Walid Ben-Ameur\and Alessandro Maddaloni}
\authortext{}{SAMOVAR, T\'el\'ecom SudParis, Institut Polytechnique de Paris, France
(\email{walid.benameur@telecom-sudparis.eu},
\email{alessandro.maddaloni@telecom-sudparis.eu}).}
\date{}
\begin{document}
\maketitle

\begin{abstract}
A set of vertices in a digraph is majority-stable if each of its vertices has at most half of its outneighbors in the set. A fractional majority coloring assigns nonnegative weights to such sets, covering each vertex to total weight at least one; its minimum total weight is the fractional majority coloring number. We prove that every finite loopless digraph has fractional majority coloring number at most \(523/140<3.736\), improving the bound \(3.9602\) of Anastos, Lamaison, Steiner and Szab\'o. Our construction uses exclusive sampling from pairs in random cycle matchings, followed by deletion and a correction on the acyclic remainder. The same approach yields bounds below \(3.430\) for digraphs with a directed cycle factor, \(3.287\) for those with an even cycle factor, and \(3.324\) for tournaments. As a consequence, for every fixed \(\varepsilon>0\), we obtain a randomized \((2.491+\varepsilon)\)-approximation algorithm producing an explicit fractional majority coloring in expected polynomial time. On the complexity side, we prove NP-completeness of deciding whether the fractional majority coloring number equals $3/2$, and establish a multiplicative inapproximability threshold of $72/71$ and an additive upper-estimation threshold of $3/142$. All three results hold for acyclic oriented digraphs with outdegrees zero or two in which every directed path has length at most two.

\end{abstract}

\section{Introduction}
A majority coloring of a digraph assigns colors to its vertices so that each vertex has its own color on at most half of its outneighbors. For undirected graphs, a partition maximizing the number of crossing edges gives a majority coloring with two colors. The directed version behaves differently: an odd directed cycle requires three colors, and the usual cut argument does not control outgoing edges separately.

Kreutzer, Oum, Seymour, van der Zypen and Wood~\cite{KOSZW} proved that every digraph has a majority coloring with four colors and conjectured that three always suffice. Their proof combines two greedy two-colorings associated with opposite vertex orders. Anholcer, Bosek and Grytczuk~\cite{ABG} extended the four-color theorem to arbitrary lists of four colors. More generally, Gir\~ao, Kittipassorn and Popielarz~\cite{GKP}, and independently Knox and \v{S}\'amal~\cite{KS}, proved that lists of \(2k\) colors suffice when each vertex may share its color with at most a \(1/k\) proportion of its outneighbors. Gir\~ao et al.\ also established the majority three-coloring conjecture for tournaments of sufficiently large minimum outdegree; their numerical refinement gives the threshold 55. Gacit\'ua Fuentes, Jara C\'adiz, Opazo Salazar, Sanhueza-Matamala and Thraves Caro~\cite[Theorems~1.7--1.8]{GJOST} lowered this threshold to 35 and proved that every tournament has a three-coloring with at most four vertices violating the majority condition.

Several structural and probabilistic results support the three-color conjecture. Anastos, Lamaison, Steiner and Szab\'o~\cite{ALSS} proved it for digraphs of chromatic number at most six or dichromatic number at most three, together with corresponding list versions. Here the dichromatic number is the minimum number of acyclic vertex classes. Their results imply majority three-choosability when the maximum outdegree is at most four. On the complexity side, Bang-Jensen, Bessy, Havet and Yeo~\cite{BBHY} proved that recognizing majority two-colorable digraphs is NP-complete, even for strongly connected digraphs in which every outdegree is three. More recently, Anastos, Cooley, Kang and Kwan~\cite{ACKK} proved that Erd\H{o}s--R\'enyi random digraphs admit majority three-colorings with high probability throughout the density range. Anholcer et al.~\cite{MrsCorrect} developed a unified treatment of weighted, list and online variants. Cai, Xia and Yan~\cite[Theorem~3.1]{CXY} proved that every \(n\)-vertex digraph with minimum outdegree greater than \(18\log n\) has a majority three-coloring, and obtained analogous degree conditions for \(1/k\)-majority \((2k-1)\)-coloring~\cite[Theorem~3.2]{CXY}. Gacit\'ua Fuentes et al.~\cite[Theorems~2.1--2.2]{GJOST} also proved majority three-colorability for concave-round digraphs, with two colors sufficient when the number of vertices is even. These digraphs admit a circular order in which each vertex's outneighbors are precisely the next consecutive vertices. Bang-Jensen, Pirot and Yeo~\cite{BPY} investigate majority partitions of two-edge-colored graphs, a related model with a separate majority condition for each edge color.

The fractional version, introduced in~\cite{KOSZW}, has received less attention. A set \(S\subseteq V(D)\) is \emph{majority-stable} if \(\abs{N_D^+(v)\cap S}\le\floor{d_D^+(v)/2}\) for every \(v\in S\). Subsets of majority-stable sets remain majority-stable. Writing \(\mathcal S(D)\) for this family, define
\begin{equation}\label{eq:lp}
\chif(D)=\min\left\{\sum_{S\in\mathcal S(D)}w_S:
 w_S\ge0,\quad \sum_{S\ni v}w_S\ge1\ \text{for every }v\in V(D)\right\}.
\end{equation}
Anastos et al.~\cite{ALSS} proved the universal bound \(\chif(D)\le3.9602\). They also showed that minimum outdegree of order \(\varepsilon^{-2}\log(1/\varepsilon)\) guarantees a fractional majority coloring of weight at most \(2+\varepsilon\). The directed triangle has fractional majority coloring number three, so a universal bound of three would be sharp and would follow from the integral conjecture.

Our upper-bound proof builds on the random-set deletion principle of Anastos et al.~\cite[Lemma~18]{ALSS}. Their construction starts with independent choices and modifies the vertices of outdegree one. Here we impose exclusive choices on disjoint pairs drawn from cycles, giving protection to vertices of arbitrary outdegree, and compensate for the remaining vertices through a partition of the  acyclic remainder into two stable sets. The probabilistic formulation, the deletion principle and the elementary binomial monotonicity are existing ingredients; the bounds below come from this different sampling distribution and its survival analysis.

Our main result uses only random matchings and exclusive choices between their endpoints. We use the same sampling parameter \(b=4/9\) for all the upper bounds below.
\begin{theorem}\label{thm:general}
Every finite loopless digraph \(D\) satisfies \(\chif(D)\le523/140<3.736\).
\end{theorem}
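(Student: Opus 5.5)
We argue through the LP dual: by duality, \(\chif(D)\le c\) holds iff for every probability distribution \(y\) on \(V(D)\) there is a majority-stable set \(S\) with \(y(S)\ge 1/c\). Equivalently, it suffices to exhibit a random majority-stable set \(S\) and a cost \(c\) with \(\Pp(v\in S)\ge 1/c\) for every \(v\), possibly after mixing with a second distribution that covers the remaining vertices. We follow the deletion principle of \cite[Lemma~18]{ALSS}. First sample a random set \(R\). Then let \(S=\{v\in R: \abs{N^+(v)\cap R}\le\floor{d^+(v)/2}\}\). Deleting vertices only lowers the counts for the survivors, so \(S\) is majority-stable. The task is to bound \(\Pp(v\in S)=\Pp(v\in R)\Pp(\text{few outneighbours in }R\mid v\in R)\) from below.

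The sampling goes as follows. Take a maximum family of vertex-disjoint directed cycles, choosing it at random, and pair consecutive vertices along each cycle using a uniformly random one of the two (near-)perfect matchings of its edges. For each pair \(\{u,w\}\), put exactly one of \(u,w\) into \(R\) with probability \(b\) each, and neither with probability \(1-2b\), with \(b=4/9\). Each unpaired cycle vertex and each other vertex enters \(R\) independently with probability \(b\). The exclusivity is the key gain. A vertex \(v\) whose outneighbour \(w\) is its partner automatically excludes \(w\) once \(v\in R\). This handles outdegree one, and more generally it shifts the conditional distribution of \(\abs{N^+(v)\cap R}\) downward. Conditioned on \(v\in R\), the other outneighbours are either independent or negatively correlated through exclusive pairs. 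Using the binomial monotonicity lemma, the count is stochastically dominated by \(\Bin(d-1,b)\) or \(\Bin(d,b)\) in the worst case. Its probability of exceeding \(\floor{d/2}\) is then bounded uniformly in \(d\), with the worst cases at small odd \(d\).

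The vertices outside the cycle family, call them \(A\), induce an acyclic subdigraph by maximality. Their outneighbours may lie anywhere, so they need separate treatment. An acyclic digraph admits a partition into two sets, each majority-stable in \(D\) restricted to its own vertices. For example, process vertices in reverse topological order and put each in the class containing at most half of its already-placed outneighbours. This needs care, because outneighbours in the cycle part must be absent from the class. So we take these two classes as stand-alone stable sets, using only \(A\)-vertices. The final weighting is \(\lambda\) on the random set \(S\) plus \(\mu\) on each of the two classes. We then optimize \(\lambda,\mu\) against the separately computed survival probabilities \(p_{\mathrm{cyc}}\) for cycle vertices and \(p_A\) for \(A\)-vertices, and the optimization should produce \(523/140\).

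The main obstacle is the survival analysis. It needs a uniform lower bound on \(\Pp(v\in S)\) over all outdegrees and all patterns of how \(v\)'s outneighbours are paired. Pairs entirely inside \(N^+(v)\) help, but pairs split across \(N^+(v)\) and its complement behave like independent Bernoulli\((b)\) variables. The worst configuration must be identified, and I would first try to prove that it is the all-independent one, plus the partner exclusion, via a coupling or majorization argument. The rational value \(523/140\) suggests that the extremal cases are small degrees, \(d\in\{1,2,3\}\), evaluated at \(b=4/9\).
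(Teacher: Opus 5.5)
Your architecture is the paper's: a maximal disjoint cycle family, random cycle matchings, exclusive sampling with $b=4/9$, simultaneous deletion, two greedy classes on the acyclic remainder, and a mixture. However, the steps that actually produce the constant are missing or wrong.

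\textbf{First gap: the comparison step.} Conditioned on $v$ being selected, the count of selected outneighbours is \emph{not} stochastically dominated by $\Bin(d,b)$ or $\Bin(d-1,b)$. A pair lying inside $N^+(v)$ contributes a $\Bin(1,2b)$ summand, and $\Pp(\Bin(1,2b)\ge1)=2b>2b-b^2=\Pp(\Bin(2,b)\ge1)$. So no coupling can exhibit domination, and majorization or convex-order information does not by itself control tail probabilities. What is true, and suffices, is a comparison of upper tails only at integer thresholds $t>n/2$; this covers the deletion threshold $\floor{d/2}+1$ for $n\in\{d-1,d\}$. The paper proves it (Lemma~\ref{lem:pairtail}) by induction on the number of pairs inside the set. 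Replacing one $\Bin(1,2b)$ summand by an independent $\Bin(2,b)$ changes $\Pp(Z\ge t)$ by $b^2\bigl(\Pp(W=t-1)-\Pp(W=t-2)\bigr)$. This is nonpositive because the binomial mass ratio $\frac{s-j+1}{j}\cdot\frac{b}{1-b}$ is at most $1$ when $j>s/2$ and $b\le1/2$. You need this lemma for remainder vertices as well, since their outneighbours may be paired on cycles.

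\textbf{Second gap: the odd-cycle rule and the numerics.} Your odd-cycle rule fails as stated. A cycle of length $2m+1$ has $2m+1$ near-perfect matchings, and any two of them contain at most $2m$ of its arcs, so some vertex is never paired with its successor. In a terminal directed triangle that vertex has outdegree one and survives with probability exactly $b(1-b)=20/81$. Your sampler then certifies nothing better than $81/20>4$. You must choose the omitted vertex uniformly, which gives every cycle vertex protection probability at least $m/(2m+1)\ge1/3$; this $1/3$ is what the bound rests on.

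Finally, the numbers are asserted rather than derived. You need the survival bound $L_d(q)=b(1-q)\Pp(\Bin(d,b)\le\floor{d/2})+bq\,\Pp(\Bin(d-1,b)\le\floor{d/2})$. You also need a proof that $\min_{d\ge1}L_d(1/3)=L_3(1/3)=5740/19683$. The paper gets this from the recurrence~\eqref{eq:recurrence} together with $L_{2m}\ge L_{2m-1}$. The minimizing degree is not automatically small: at $q=1/2$ it moves to $d=5$. Add $p_A\ge b(1-b)=20/81$, which follows from $\Phi_d(b)\le b$. Only then does equalizing $\lambda p_{\mathrm{cyc}}$ with $\lambda p_A+(1-\lambda)/2$ give $(1+2(p_{\mathrm{cyc}}-p_A))/p_{\mathrm{cyc}}=523/140$.
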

A maximal collection of vertex-disjoint directed cycles leaves an acyclic remainder. On each cycle, a random matching makes every vertex the tail of a matched arc with probability at least one third. We select at most one endpoint of every pair, with the same marginal probability at both endpoints. Thus, when a selected vertex is the tail of a matched arc, one of its outneighbors is certainly absent. This local protection improves its chance of surviving the deletion of vertices with too many selected outneighbors. The acyclic remainder admits two majority-stable classes; mixing with these classes equalizes the guaranteed inclusion probabilities.

If the cycles cover all vertices, no correction is needed. The same sampling parameter gives the following bounds.
\begin{theorem}\label{thm:factors}
If \(D\) has a directed cycle factor, then
\(\chif(D)\le19683/5740<3.430\).
If it has a factor consisting of even directed cycles, then \(\chif(D)\le177147/53900<3.287\).
\end{theorem}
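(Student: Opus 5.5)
The plan is to exhibit a random majority-stable set \(S'\) with \(\Pp(v\in S')\ge p\) for every vertex \(v\). Putting weight \(\Pp(S'=S)/p\) on each set \(S\) is then feasible for \eqref{eq:lp}, so \(\chif(D)\le 1/p\). Fix a directed cycle factor \(\mathcal C\) and set \(b=4/9\).

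\textbf{Matchings.} On each cycle \(C\in\mathcal C\), choose a random set \(M_C\) of pairwise disjoint arcs of \(C\) such that every vertex of \(C\) is the tail of an arc of \(M_C\) with probability at least \(1/3\).
\begin{itemize}
\item For an even cycle, pick one of its two perfect matchings uniformly; the probability is then exactly \(1/2\).
\item For an odd cycle of length \(L=2m+1\), take a uniformly random maximum matching of \(m\) arcs. Each arc is used with probability \(m/L\ge1/3\).
\end{itemize}

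\textbf{Sampling and deletion.} For each matched pair \(\{u,w\}\), independently select \(u\) with probability \(b\), \(w\) with probability \(b\), and neither with probability \(1-2b=1/9\). Every unmatched vertex is selected independently with probability \(b\). Let \(S\) be the selected set, and let \(S'\) consist of those \(v\in S\) with \(\abs{N^+(v)\cap S}\le\floor{d^+(v)/2}\). By construction \(S'\) is majority-stable, and \(\Pp(v\in S')=b\cdot\Pp(v \text{ survives}\mid v\in S)\).

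\textbf{Survival of a selected vertex.} Condition on \(v\in S\) and on the matching. If \(v\) is the tail of a matched arc \(vw\), then \(w\notin S\) with certainty, and only the other \(d-1\) outneighbors of \(v\) (where \(d=d^+(v)\)) can be selected. Each of those outneighbors has marginal selection probability \(b\), or \(0\) if it is \(v\)'s partner. The selection indicators are independent across pairs, and within a pair they are exclusive, so \(X_u+X_w\in\{0,1\}\).

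The main obstacle is to show that this negative dependence only helps, that is,
\[
\Pp\bigl(\abs{N^+(v)\cap S}\le k\bigr)\ \ge\ \Pp\bigl(\Bin(d',b)\le k\bigr)
\]
with \(d'=d-1\) in the protected case and \(d'=d\) otherwise. I would prove this by replacing exclusive pairs one at a time with independent pairs. For \(k\) in the relevant range, a pair sum \(Y\in\{0,1\}\) has \(\Pp(Y\ge1)=2b\ge \Pp(Z\ge1)=1-(1-b)^2\), which goes the wrong way. So a naive stochastic-domination argument fails, and I would instead compare
\[
\Pp(T+Y\le k)-\Pp(T+Z\le k)=b^2\,\Pp(T=k-1)-b^2\,\Pp(T=k)\cdot(\dots).
\]
Here \(Y\) and \(Z\) have the same mean, \(Z\) has larger variance, and the sign is controlled through unimodality of the Poisson-binomial \(T\) around a mode below \(k=\floor{d/2}\), using \(b<1/2\). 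This is the binomial monotonicity ingredient.

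\textbf{The minimisation over \(d\).} With the comparison in hand, the bound becomes
\[
\Pp(v \text{ survives}\mid v\in S)\ \ge\ \min_{d\ge1}\Big[q\,\Pp\bigl(\Bin(d-1,b)\le\floor{d/2}\bigr)+(1-q)\,\Pp\bigl(\Bin(d,b)\le\floor{d/2}\bigr)\Big],
\]
with \(q=1/3\) for a general cycle factor and \(q=1/2\) for an even one. I would show that the minimum is attained at small odd \(d\), using monotonicity of these binomial tails along odd and even \(d\) together with a Chernoff bound for large \(d\) since \(b<1/2\); the finitely many remaining \(d\) are checked by direct computation.

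For \(q=1/3\) the minimiser is \(d=3\):
\[
\tfrac13\cdot\tfrac{65}{81}+\tfrac23\cdot\tfrac{425}{729}=\tfrac{1435}{2187},
\qquad\text{so}\qquad
p=\tfrac49\cdot\tfrac{1435}{2187}=\tfrac{5740}{19683},
\]
which gives \(19683/5740\). For \(q=1/2\) the weight on protection shifts the minimiser to a larger odd \(d\); I expect \(d=5\) from the denominator \(3^{11}\). Evaluating there gives \(p=53900/177147\).
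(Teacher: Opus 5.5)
Your plan follows the paper's proof closely. You use the same cycle matchings (a uniform maximum matching of an odd cycle is exactly a uniform choice of the unmatched vertex), exclusive sampling on pairs with $b=4/9$, simultaneous deletion, pair-by-pair replacement of exclusive pairs by independent ones, and minimization over $d$, and you reach the correct values at $d=3$ and $d=5$.

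The gap is in the step you yourself call the main obstacle. The exact identity is $\Pp(T+Y\le k)-\Pp(T+Z\le k)=b^2\bigl(\Pp(T=k-1)-\Pp(T=k)\bigr)$, with no further factor. So everything rests on $\Pp(T=k)\le\Pp(T=k-1)$, and saying that $T$ is unimodal ``around a mode below $k$'' only restates this. Unimodality does not locate the mode, and the mean only confines it to $\{\lfloor\mu\rfloor,\lceil\mu\rceil\}$. That is not enough: for odd $d=2k+1$ without protection, $T$ has mean $(2k-1)b$, which exceeds $k-1$ for every $k\le4$ when $b=4/9$.

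For instance, take $d=5$ with $v$ unprotected and its outneighbors containing two exclusive pairs and one further vertex. Removing one pair leaves $T=\Bin(1,8/9)+\Bin(1,4/9)$, with mean $4/3$. The needed inequality $\Pp(T=2)=32/81\le44/81=\Pp(T=1)$ holds, but nothing in your sketch explains why.

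The paper's Lemma~\ref{lem:pairtail} closes this by conditioning. Write $T=I+J$, where $I\sim\Bin(a,2b)$ counts the remaining exclusive pairs and $J\sim\Bin(c,b)$ counts all independently sampled vertices, with $2a+c=n-2$ and $n\in\{d-1,d\}$. Since $k+1>n/2$, every value $i\le a$ of $I$ gives $j=k-i>c/2$. For $j\le c$ one then has $\Pp(J=j)/\Pp(J=j-1)=\frac{c-j+1}{j}\cdot\frac{b}{1-b}\le1$, while $\Pp(J=j)=0$ for $j>c$. Averaging over $I$ yields $\Pp(T=k)\le\Pp(T=k-1)$.

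Your minimization route (a Chernoff bound for large $d$ plus a finite check) is valid in principle but heavy, and it has not been carried out. In particular, ``I expect $d=5$'' must be verified rather than read off from a denominator. The bound $\Pp(\Bin(d,4/9)\ge d/2)\le(80/81)^{d/2}$ falls below the required margins only from $d=172$ on for $q=1/3$ (margin $752/2187$) and from $d=186$ on for $q=1/2$ (margin $18624/59049$). Well over a hundred exact evaluations would therefore remain.

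Monotonicity of the individual tails cannot by itself locate the minimizer. The protection gain $qb^2R_j$, with $R_j=\binom{2j}{j}(20/81)^j$, decreases in $j$ while the unprotected term increases. The paper instead uses the exact identity \eqref{eq:recurrence}, $L_{2j+3}(q)-L_{2j+1}(q)=\frac{16R_j}{6561(j+1)}\bigl((10-q)j+5-41q\bigr)$, together with $L_{2m}(q)\ge L_{2m-1}(q)$. The sign of this linear function of $j$ places the minimum at $d=3$ for $q=1/3$ and at $d=5$ for $q=1/2$.
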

A directed cycle factor is a family of vertex-disjoint directed cycles covering all vertices. The first assertion therefore applies whenever the remainder in our construction is empty, and in particular to every \(r\)-regular digraph with \(r\ge1\). Here \(r\)-regular means that every vertex has both indegree and outdegree equal to \(r\). On concave-round digraphs satisfying the factor hypothesis, the integral results of~\cite{GJOST} give stronger fractional bounds. The factor assumption alone imposes no interval condition on the outneighborhoods.

A further application gives \(\chif(T)<3.324\) for every tournament (Theorem~\ref{thm:tournament}). It uses an outdegree-dependent choice of the endpoint of a Hamilton path; the construction and proof are presented in Appendix~\ref{app:tournament}.

Our complexity result establishes a fixed gap above \(3/2\), the smallest possible value for a digraph containing an arc.
\begin{theorem}\label{thm:complexity}
For every fixed \(1\le\rho<72/71\), approximating the fractional majority coloring number within multiplicative ratio \(\rho\) is NP-hard. For every fixed \(0<\delta<3/142\), computing an upper estimate with additive error at most \(\delta\) is NP-hard. Both statements hold for acyclic oriented digraphs of maximum outdegree two. 
\end{theorem}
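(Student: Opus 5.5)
The plan is to reduce from a constraint satisfaction problem with a known hardness gap, using the very restricted structure of the target digraphs. In an acyclic oriented digraph where every outdegree is $0$ or $2$, a vertex $v$ with $d^+(v)=2$ is violated in $S$ exactly when $v$ and both of its outneighbors lie in $S$, since $\floor{2/2}=1$. So $\mathcal S(D)$ is the family of sets containing no full \emph{triple} $T_v=\{v\}\cup N^+(v)$, and $\chif(D)$ is the fractional chromatic number of the $3$-uniform hypergraph $\{T_v\}$ in the non-monochromatic sense.

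\textbf{Structure at $3/2$.} Every majority-stable set meets each triple in at most two vertices. If $w$ is feasible for \eqref{eq:lp}, summing the coverage constraints over one triple $T_v$ gives $3\le\sum_S w_S\abs{S\cap T_v}\le 2\sum_S w_S$. Hence $\chif(D)\ge 3/2$ whenever $D$ has an arc. Equality forces every set in the support of an optimal $w$ to meet every triple in exactly two vertices, and forces every vertex to be covered exactly once. Equivalently, the complements $V\setminus S$ are exact transversals (positive $1$-in-$3$ solutions) of the triple system, and they cover each vertex with weight exactly $1/2$. The equality problem is in NP because an optimal basic solution of \eqref{eq:lp} has polynomial support. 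Verifying it only needs to check that each listed set is stable and that the total weight is $3/2$; optimality is then automatic from the lower bound.

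\textbf{Reduction.} I would reduce from a variant of positive $1$-in-$3$ SAT: encode each clause as a triple $\{v,a,b\}$ with arcs $v\to a$ and $v\to b$. The acyclicity, oriented, and path-length-at-most-two conditions are to be enforced by a layered gadget: clause-centre vertices are sources, variable vertices are sinks, and middle copies are tied to their variables by equality gadgets. These gadgets must force the copies to take the same value in every exact transversal. Completeness needs more than one satisfying assignment, because each vertex must be covered by total weight exactly $1/2$. I would symmetrize, taking a balanced family of exact transversals (for instance a satisfying assignment together with its images under a cyclic shift of the roles inside the gadgets) so that each vertex is hit exactly one third of the time. Soundness uses the structure above: $\chif=3/2$ yields at least one exact transversal, hence a satisfying assignment of the source instance.

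\textbf{Gap amplification (main obstacle).} For the $72/71$ and $3/142$ thresholds, I would start from a gap version of the source CSP: satisfiable versus every assignment violating at least an $\eta$ fraction of constraints, with an explicit $\eta$ coming from a known PCP-type result. I would then use LP duality. A dual solution $y\ge 0$ on vertices satisfying $\sum_{v\in S}y_v\le1$ for every $S\in\mathcal S(D)$ certifies $\chif\ge\sum_v y_v$. Take $y$ uniform-ish on the triples, scaled by $2/3$. In the soundness case every stable set must leave many triples less than doubly covered, so $y$ can be raised by a constant factor. Tracking constants through the gadget sizes should give $\chif\ge 3/2+3/142=\tfrac{3}{2}\cdot\tfrac{72}{71}$. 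The delicate part is designing gadgets whose local LP loss is exactly computable while keeping the path-length-two restriction. I expect this bookkeeping of $\eta$ through the gadget blow-up to be the hardest step. Once it is done, the multiplicative claim for $\rho<72/71$ and the additive claim for $\delta<3/142$ both follow directly from the gap.
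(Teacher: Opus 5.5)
Your opening observations are sound (for outdegrees $0$ or $2$, stable sets are exactly the sets containing no full triple $\{v\}\cup N^+(v)$, which gives $\chif\ge3/2$). However, the proposal has a genuine gap: the quantitative part, which is the whole content of the theorem, is never carried out, and the threshold $72/71$ is asserted rather than derived. You do not fix a source problem with an explicit gap, and you do not specify the gadgets. The sentence ``tracking constants through the gadget sizes should give $3/2+3/142$'' only restates the target. In the paper the constant comes from a specific chain:
the Austrin--O'Donnell--Tan--Wright hardness of $\mathrm{2NLin}(\mathbb Z_3)$, satisfiable versus $\mu(\mathcal I)>1/12-\eta$;
the digraph $D_2(\mathcal I)$ with arcs $x_{v,0}\to x_{v,1},x_{v,2}$ and $N^+(z_{e,i})=\{x_{u,i},x_{v,i-c_e}\}$;
and the soundness inequality $\mu(\mathcal I)\le6-9/K$.
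This yields $K>108/(71+12\eta)$, which tends to $108/71=\frac32\cdot\frac{72}{71}$ as $\eta\to0$, and $108/71-3/2=3/142$. Without an explicit inequality of this kind, your plan determines no constant at all.

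The missing idea is a robust decoding step, and the dual route you sketch does not supply one. Raising $y$ ``by a constant factor'' requires proving $y(S)\le1$ for every majority-stable set, including sets whose complements meet triples several times and encode no assignment. You give no mechanism for that. The paper works on the primal side instead. By Lemma~\ref{lem:distribution} it takes a random stable set with all marginals exactly $1/K$, so each triple has expected excess $\tau=2-3/K$ of omitted vertices beyond the one it must contain. Choosing $a(v)$ uniformly from the list $L_v$ of omitted labels then violates $e$ with probability at most $\E\abs{L_u\cap(L_v+c_e)}\le3\tau$, obtained by summing the excess over the triples $\{x_{u,i},x_{v,i-c_e},z_{e,i}\}$.

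Your completeness step is also unsupported for a 1-in-3 SAT source. Value exactly $3/2$ needs a distribution of exact transversals hitting every vertex with probability $1/3$. Satisfiability alone does not give this, and ``cyclic shifts of roles inside the gadgets'' is not justified. The paper gets it for free from $\mathbb Z_3$ translation: the assignments $a+t$, $t\in\mathbb Z_3$, give three stable sets covering every vertex exactly twice.
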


The additive inapproximability threshold \(3/142\) concerns upper estimates of the optimum. It follows from the same hardness gap as the multiplicative threshold \(72/71\). The same reduction shows that deciding whether \(\chif(D)=3/2\) is NP-complete, even on this restricted class (Corollary~\ref{cor:threehalves}).

Combining the universal upper bound with the lower bound \(3/2\) gives a deterministic polynomial-time estimate of the optimum within factor \(2.491\). Section~\ref{sec:approximation} gives both this value estimate and a randomized construction of an explicit fractional coloring with expected polynomial running time; its analysis is in Appendix~\ref{app:algorithm}. The hardness proof uses the results of  Austrin, O'Donnell, Tan and Wright~\cite{AOTW}  related to constraint-satisfaction gaps; the reduction to fractional majority coloring and its quantitative soundness analysis are the contributions here.

All digraphs are finite, without loops or parallel arcs; opposite arcs are allowed. A tournament has exactly one directed arc between each pair of distinct vertices. Unless an induced digraph is explicitly specified, degrees and majority thresholds refer to the original digraph. We abbreviate \(d_D^+(v)\) to \(d(v)\) and put \(k(v)=\floor{d(v)/2}\).

\section{The random construction}\label{sec:construction}
\subsection{From a random set to a fractional coloring}
The following distribution interpretation of~\eqref{eq:lp} is standard. Its use with deletion of vertices violating the majority condition is the principle of Anastos et al.~\cite[Lemma~18]{ALSS}.
\begin{lemma}\label{lem:distribution}
If a random majority-stable set \(S\) satisfies \(\Pp(v\in S)\ge p>0\) for every vertex, then \(\chif(D)\le1/p\). Moreover, for every \(K\ge1\), the inequality \(\chif(D)\le K\) holds if and only if there is a random majority-stable set \(S\) satisfying \(\Pp(v\in S)=1/K\) for every vertex.
\end{lemma}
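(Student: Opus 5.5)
The plan is to read the linear program~\eqref{eq:lp} as an optimization over probability distributions on \(\mathcal S(D)\), using two facts: the family \(\mathcal S(D)\) is finite, and it is closed under taking subsets (in particular \(\emptyset\in\mathcal S(D)\)).

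For the first assertion, I set \(w_S=\Pp(S\text{ is the random set})/p\) for every \(S\in\mathcal S(D)\). This is a legitimate weighting because \(S\) takes values in the finite family \(\mathcal S(D)\).
\begin{itemize}
\item The weights are nonnegative.
\item The coverage constraint holds at every vertex, since \(\sum_{S\ni v}w_S=\Pp(v\in S)/p\ge1\).
\item The total weight is \(\sum_S w_S=1/p\).
\end{itemize}
Hence \(\chif(D)\le1/p\).

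For the equivalence, the direction ``\(\Leftarrow\)'' is the first assertion with \(p=1/K\). For ``\(\Rightarrow\)'', I proceed in three steps.
\begin{enumerate}
\item Take an optimal weighting \(w\) of~\eqref{eq:lp}. It exists because the program is feasible (singletons are majority-stable), is bounded below by zero, and has finitely many variables. Its total weight is \(W=\chif(D)\le K\).
\item Make every vertex covered with weight exactly one, using closure under subsets. Process the vertices one by one. If \(v\) is covered with total weight \(c_v>1\), move weight \(c_v-1\), taken from sets containing \(v\), onto the sets obtained by removing \(v\). This keeps every set majority-stable, leaves the total weight unchanged, and does not change the coverage of any other vertex. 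After all vertices are processed, each has coverage exactly \(1\) and the total is still \(W\).
\item Turn the weights into a probability distribution. Put \(w_\emptyset\) increased by \(K-W\ge0\), so the total becomes exactly \(K\). Then choose \(S\) with probability \(w_S/K\). This is a distribution on \(\mathcal S(D)\), and \(\Pp(v\in S)=\sum_{S\ni v}w_S/K=1/K\) for every vertex.
\end{enumerate}

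The only delicate point is the exact-equality requirement in step 2. Closure under subsets is exactly what makes the reduction of over-coverage possible, and I would state it explicitly in the proof. Everything else is bookkeeping.
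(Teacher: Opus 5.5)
Your proof is correct and follows essentially the paper's argument. The first assertion is argued identically, and for the converse both proofs pad with the empty set to reach total weight \(K\) and use closure of \(\mathcal S(D)\) under subsets to remove excess coverage; the only difference is that you trim deterministically before normalizing (shifting weight from each set onto the set with \(v\) deleted, one vertex at a time), whereas the paper normalizes first and then thins \(S'\) by keeping each \(v\in S'\) independently with probability \(1/(Kp_v)\).
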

\begin{proof}
For the first assertion, give each majority-stable set \(I\) weight \(\Pp(S=I)/p\). The total weight is \(1/p\), and the weight covering a vertex \(v\) is \(\Pp(v\in S)/p\ge1\). This also proves the reverse implication in the second assertion by taking \(p=1/K\).

Conversely, take a fractional coloring of total weight at most \(K\), add any missing weight to the empty set, and divide all weights by \(K\). This gives a random majority-stable set \(S'\) with marginal \(p_v=\Pp(v\in S')\ge1/K\) at each vertex. Conditional on \(S'\), retain each \(v\in S'\) independently with probability \(1/(Kp_v)\), which belongs to \((0,1]\). The resulting subset \(S\) is majority-stable: removing vertices cannot increase the number of selected outneighbors of a remaining vertex. Finally, \(\Pp(v\in S)=p_v/(Kp_v)=1/K\), as required. 
\end{proof}

\subsection{Why pair vertices?}
Suppose first that every vertex is selected independently with probability \(b\), and a selected vertex is deleted if more than half of its outneighbors were selected. A vertex of outdegree one survives with probability \(b(1-b)\), which is at most \(1/4\). Applying  Lemma~\ref{lem:distribution} leads to the trivial bound \(4\) for the fractional majority coloring number. 
 The issue is that the unique outneighbor of a vertex of outdegree one may be selected at the same time.

Now consider an arc \(x\to y\), and choose \(\{x\}\), \(\{y\}\), or neither with probabilities \(b,b,1-2b\), where \(0<b\le1/2\). Both endpoints still have selection probability \(b\), but they cannot occur together. Conditional on selecting \(x\), its outneighbor \(y\) is absent. This is the protection that pairing provides: one specified outneighbor is guaranteed to be absent when the tail is selected. Protection guarantees survival for a vertex of outdegree one; at larger outdegree, the other outneighbors must still be controlled.

A collection of disjoint pairs partitions the vertex set into \emph{blocks}: each pair is a block of size two, and each vertex belonging to no pair forms a singleton block. Once the pairs are fixed, we make independent random choices on these blocks. Dependence between the two endpoints of a pair is intentional; it does not introduce dependence between different blocks. Directed cycles provide random pairings under which every cycle vertex has a positive probability of being the tail of a paired arc, as illustrated in Figure~\ref{fig:cycle-matchings}. Vertices outside the cycles will be handled using their acyclic structure.

\begin{figure}[!ht]
\centering
\includegraphics[width=\linewidth]{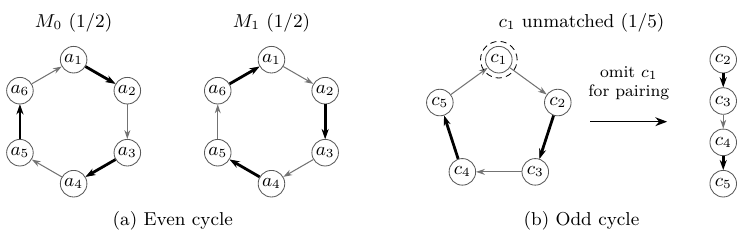}
\caption{Random matchings on directed cycles. Thick arcs are the matched pairs. (a) Choose either alternating perfect matching with probability \(1/2\). (b) Choose an unmatched vertex uniformly (here \(c_1\), with probability \(1/5\)), and match consecutive vertices on the remaining even path. The unmatched vertex remains in the digraph and forms a singleton block for sampling.}
\label{fig:cycle-matchings}
\end{figure}
\FloatBarrier

\subsection{Sampling and deletion}\label{sec:sampling}
Choose an inclusionwise maximal family of vertex-disjoint directed cycles, let \(C\) be their union, and put \(R=V(D)\setminus C\). The digraph \(D[R]\) is acyclic.

Fix \(b=4/9\), the value used in all our sampling constructions. We retain the symbol \(b\) in the probability formulas. Construct a random set \(X\) as follows.
\begin{enumerate}
\item On an even directed cycle, choose one of its two alternating perfect matchings with equal probability. On an odd directed cycle, choose an unmatched vertex uniformly and pair consecutive vertices along the even path obtained by omitting that vertex. Each pair retains the directed cycle arc used to form it. Make these structural choices independently on different cycles; see Figure~\ref{fig:cycle-matchings}.
\item Conditional on these choices, sample independently between blocks. For each pair \(\{x,y\}\), choose \(\{x\}\), \(\{y\}\), or neither with probabilities \(b,b,1-2b\). For each singleton block, choose its vertex with probability \(b\) and choose nothing with probability \(1-b\). The singleton blocks are the unmatched cycle vertices (one vertex per odd cycle) and the vertices of \(R\). Let \(X\) be the union of the chosen subsets of the blocks.
\item Call a selected vertex \(v\) \emph{popular} if it has more than \(k(v) = \lfloor d(v)/2 \rfloor\) outneighbors in \(X\). Delete all popular vertices simultaneously, obtaining
\begin{equation}\label{eq:deletion}
S_0=\{v\in X:\abs{N_D^+(v)\cap X}\le k(v)\}.
\end{equation}
\end{enumerate}
The set \(S_0\) is majority-stable.

\paragraph{Selection probabilities.}
Let \(\mathcal M\) be the random configuration consisting of all chosen matchings and their directed arc labels. Fix a possible configuration \(M\). Every vertex belongs to exactly one block. If \(v\) is in a pair \(\{v,w\}\), then \(v\in X\) exactly when the outcome for this block is \(\{v\}\), which has probability \(b\). If \(v\) is a singleton, its inclusion probability is also \(b\) by construction. Thus \(\Pp(v\in X\mid\mathcal M=M)=b\) for every possible \(M\), regardless of whether \(v\) is paired or unmatched in that configuration. Summing over the configurations gives \(\Pp(v\in X)=\sum_M b\Pp(\mathcal M=M)=b\).

\paragraph{Protection probabilities.}
For a fixed vertex \(v\), let \(P_v\) be the event that the matching pairs \(v\) with one of its outneighbors in \(D\). We say that the matching \emph{protects} \(v\) on this event, and define its \emph{protection probability} by \(q_v=\Pp(P_v)\). If \(P_v\) occurs and \(v\in X\), its mate is an outneighbor that cannot belong to \(X\). Protection depends on the chosen pair and the arcs of \(D\), not on which endpoint is the tail of the designated cycle arc: a pair formed from \(u\to v\) also protects \(v\) whenever \(v\to u\) is an arc of \(D\).

On an even cycle of length at least four, the outgoing cycle arc of \(v\) belongs to exactly one of the two alternating matchings. Hence \(q_v\ge1/2\). On an odd cycle \(v_0\to\cdots\to v_{2m}\to v_0\), the vertex \(v_i\) is a designated matched tail precisely when the unmatched vertex is one of \(v_{i-1},v_{i-3},\ldots,v_{i-(2m-1)}\), with indices modulo \(2m+1\). These are \(m\) distinct choices out of \(2m+1\), so \(q_{v_i}\ge m/(2m+1)\ge1/3\). These lower bounds allow additional protection through reverse arcs.

On a directed two-cycle, or \emph{digon}, both alternating choices give the same underlying pair, with opposite designated arc labels. Whichever choice is made, each vertex is paired with its outneighbor. Thus \(q_v=1\) for each vertex of a selected digon. Vertices of \(R\) are singleton blocks in every configuration, so \(q_v=0\) there.

\subsection{Correcting on the acyclic remainder}
Use the standard greedy majority two-coloring of an acyclic digraph~\cite{KOSZW}: color \(D[R]\) in reverse topological order. When a vertex is reached, all of its outneighbors in \(R\) have been colored; choose a color occurring on at most half of them. This partitions \(R\) into sets \(R_0,R_1\), each majority-stable in \(D[R]\). Each is also majority-stable in \(D\), because it contains no vertex of \(C\) and \(d_{D[R]}^+(v)\le d_D^+(v)\).

Suppose that the deletion analysis gives a survival probability at least \(p_R\) for every vertex of the remainder \(R\), and at least \(p_C\ge p_R\) for every vertex of the cycles \(C\). With probability \(\lambda\), return \(S_0\); otherwise return one of \(R_0,R_1\), uniformly. Call the returned set \(S\). It is always majority-stable. A vertex in \(C\) has inclusion probability at least \(\lambda p_C\), while a vertex in \(R\) has probability at least \(\lambda p_R+(1-\lambda)/2\). Equalizing these two expressions gives
\begin{equation}\label{eq:mixture}
\lambda=\frac{1}{1+2(p_C-p_R)},\qquad
p=\frac{p_C}{1+2(p_C-p_R)},\qquad
\chif(D)\le\frac{1+2(p_C-p_R)}{p_C}.
\end{equation}
Thus \(p_R,p_C\) describe survival before the correction, and \(p\) is the guaranteed inclusion probability in the final set. If \(R\) is empty, we simply return \(S_0\) and use the stronger bound \(1/p_C\).

\begin{table}[!ht]
\centering
\small
\renewcommand{\arraystretch}{1.16}
\begin{tabular}{@{}p{0.19\linewidth}p{0.77\linewidth}@{}}
\toprule
Notation & Meaning in the construction and its analysis \\
\midrule
\(d(v),\ k(v)\) & Outdegree in the original digraph; majority threshold \(k(v)=\floor{d(v)/2}\). \\
\(C,\ R\) & Vertices on the chosen disjoint cycles; the acyclic remainder. \\
\(X,\ S_0\) & Initially selected vertices; vertices surviving simultaneous deletion. \\
\(b=4/9\) & Selection probability of every vertex, for every fixed matching. \\
\(q_v,\ q\) & Actual probability that \(v\) is paired with an outneighbor; a lower bound on that probability. \\
\(L_d(q)\) & Survival lower bound \(L_{d,4/9}(q)\) for outdegree \(d\) and protection at least \(q\); see~\eqref{eq:Ldq}. \\
\(p_R,\ p_C\) & Common survival guarantees on the remainder and on the cycles. \\
\(\lambda,\ p\) & Probability of using \(S_0\) in the mixture; common inclusion guarantee after the mixture. \\
\bottomrule
\end{tabular}
\caption{Recurring notation for the upper-bound proof. Protection is a property of the matching; selection means membership in \(X\); survival means membership in \(S_0\).}
\label{tab:notation}
\end{table}
\FloatBarrier

\section{Survival probabilities}
\label{sec:analysis}

We first justify the probability comparison needed for dependent pairs, then state a general survival bound and minimize it over all outdegrees. The notation \(\Bin(n,p)\) denotes the number of successes in \(n\) independent trials, each with success probability \(p\). In particular, \(\Bin(1,p)\) is the Bernoulli law: it equals one with probability \(p\) and zero otherwise. The probability comparisons and the survival lemma hold for all \(0<b\le1/2\); their numerical applications use \(b=4/9\).

\subsection{A comparison for exclusive pairs}\label{sec:paircomparison}
Fix all matching choices before examining the sampling. For any fixed set \(A\) of vertices, classify the blocks according to their intersection with \(A\). Let \(r\) be the number of pair blocks entirely contained in \(A\), and let \(s\) be the number of blocks meeting \(A\) in exactly one vertex. Since the union of all blocks  equals $V(D)$,  it follows that \(\abs A=2r+s\). A block counted by \(s\) may be a singleton or a pair whose other endpoint lies outside \(A\).

A pair contained in \(A\) contributes one vertex to \(X\cap A\) with probability \(2b\), and contributes zero with probability \(1-2b\): its two endpoints are never chosen together. Its contribution therefore has law \(\Bin(1,2b)\). A block meeting \(A\) in one vertex contributes \(\Bin(1,b)\). Blocks disjoint from \(A\) contribute zero. These contributions are independent because the choices on distinct blocks are independent after the matching configuration is fixed. Consequently \(\abs{X\cap A}\) is the sum of two independent variables with laws \(\Bin(r,2b)\) and \(\Bin(s,b)\).

We will apply this description to the outneighbors of a vertex \(v\), after conditioning also on \(v\in X\). This conditioning only determines the outcome of the block containing \(v\). More explicitly, conditional on a fixed configuration \(M\), let \(Y_B\) be the chosen subset of block \(B\), and let \(B_v\) contain \(v\). The event \(v\in X\) depends only on \(Y_{B_v}\). For any possible outcomes \(y_B\) on the other blocks, independence gives
\[
\Pp\bigl(Y_B=y_B\text{ for all }B\ne B_v\mid\mathcal M=M,\ v\in X\bigr)
=\prod_{B\ne B_v}\Pp(Y_B=y_B\mid\mathcal M=M).
\]
Thus the other blocks keep both their original laws and their mutual independence. If \(v\) has a mate, that mate is absent under this conditioning. Remove it from the outneighbor set if necessary; the remaining outneighbors lie in blocks other than \(B_v\), so their selected count has exactly the two-binomial form above. No independence between individual endpoints in the same pair is asserted. Nor do we need independence after averaging over the random matchings: the comparison below is applied separately to each fixed configuration.

The next lemma bounds the upper tail of this count by the tail obtained from independent selection of all \(2r+s\) vertices. Only thresholds strictly above half of that number are needed.
\begin{lemma}\label{lem:pairtail}
Let \(0<b\le1/2\), let \(r,s\) be nonnegative integers, and let \(Z\) be the sum of independent variables with laws \(\Bin(r,2b)\) and \(\Bin(s,b)\). If \(n=2r+s\) and \(t>n/2\) is an integer, then
\begin{equation}\label{eq:pairtail}
\Pp(Z\ge t)\le\Pp(\Bin(n,b)\ge t).
\end{equation}
\end{lemma}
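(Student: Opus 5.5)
The plan is to pass from \(Z\) to \(\Bin(n,b)\) by splitting the pair variables one at a time. At each step we replace a single \(\Bin(1,2b)\) summand by two independent \(\Bin(1,b)\) summands, and we show that the upper tail at every integer \(t>n/2\) does not decrease. After \(r\) steps the sum is exactly \(\Bin(n,b)\), and chaining the inequalities gives \eqref{eq:pairtail}. So we argue by induction on \(r\). The case \(r=0\) is trivial, since then \(Z\) already has law \(\Bin(n,b)\).

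For one step, let \(W\) be the sum of the untouched summands: \(r'\) variables with law \(\Bin(1,2b)\) and \(s'\) with law \(\Bin(1,b)\), where \(2r'+s'=n-2\). Let \(U\sim\Bin(1,2b)\) and \(V\sim\Bin(2,b)\), each independent of \(W\). The difference of the two point-mass functions \(\Pp(V=\cdot)-\Pp(U=\cdot)\) equals \(b^2,-2b^2,b^2\) at \(0,1,2\). Conditioning on \(W\) therefore gives
\[
\Pp(W+V\ge t)-\Pp(W+U\ge t)=b^2\bigl[\Pp(W\ge t)-2\Pp(W\ge t-1)+\Pp(W\ge t-2)\bigr]=b^2\bigl[\Pp(W=t-2)-\Pp(W=t-1)\bigr].
\]
The step is thus reduced to the monotonicity statement \(\Pp(W=j-1)\ge\Pp(W=j)\) for \(j=t-1\). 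Because \(t\) is an integer with \(t>n/2\), we have \(j\ge (n-1)/2=(n-2)/2+1/2\). The mean of \(W\) is \(\mu=2br'+bs'=b(n-2)\le (n-2)/2\), since \(b\le1/2\). Hence \(j\ge\mu+1/2\), and in particular \(j>\mu\). If \(j\) exceeds the number of summands of \(W\), the right-hand side is nonnegative trivially, so we may assume otherwise.

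It remains to show that the probability mass function of a Poisson binomial variable \(W\) is nonincreasing from index \(j-1\) to \(j\) whenever \(j\ge\mu+1/2\). Here \(W\) is a sum of independent Bernoulli variables, and its generating polynomial has only real roots. By Newton's inequalities its mass function is log-concave, hence unimodal, so it suffices to show that some mode lies at or below \(j-1\). For this I would first invoke Darroch's theorem on the mode of a Poisson binomial law, which places every mode within distance less than one of \(\mu\) and selects \(\lfloor\mu\rfloor\) or \(\lceil\mu\rceil\) according to the position of \(\mu\). I expect this to be the main obstacle: the crude consequence \(\text{mode}<\mu+1\) only yields \(\text{mode}\le j\), which is off by one. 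The extra half unit in \(j\ge\mu+1/2\) has to be used. That requires the sharper form of Darroch's result: if \(k\ge\mu+1/2\) is an integer, then \(\Pp(W=k)\le\Pp(W=k-1)\).

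If quoting that form is unsatisfactory, the fallback is a direct ratio estimate. Write \(W=W_i+\xi_i\) with \(\xi_i\sim\Bin(1,\pi_i)\), and use the identity \(j\,\Pp(W=j)=\sum_i\pi_i\Pp(W_i=j-1)\). This identity reduces the desired inequality to an averaged comparison, which one can verify using \(j\ge\mu+1/2\) together with the strict log-concavity of \(W_i\). Once the mode location is established, each splitting step is monotone and the induction closes, proving \eqref{eq:pairtail}.
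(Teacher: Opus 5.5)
Your induction on \(r\) and your one-step identity \(\Pp(W+V\ge t)-\Pp(W+U\ge t)=b^2\bigl[\Pp(W=t-2)-\Pp(W=t-1)\bigr]\) are correct, and they are the same reduction the paper uses.

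\textbf{The gap is in the monotonicity step.} You weaken the information you actually have, \(j=t-1\ge(n-1)/2\), to \(j\ge\mu+1/2\). You then claim that every Poisson binomial variable satisfies \(\Pp(W=k)\le\Pp(W=k-1)\) for all integers \(k\ge\mu+1/2\). That claim is false, so no sharper form of Darroch's theorem can give it. For \(W\sim\Bin(3,4/5)\) we have \(\mu=12/5\) and \(k=3\ge 29/10\), yet \(\Pp(W=3)=64/125>48/125=\Pp(W=2)\).

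Even for an ordinary \(\Bin(N,p)\), the mass decreases from \(k-1\) to \(k\) exactly when \(k\ge(N+1)p=\mu+p\). So the extra half unit suffices for binomials only when \(p\le1/2\). Your pair summands have parameter \(2b\), which exceeds \(1/2\) whenever \(b>1/4\); this includes the paper's \(b=4/9\). In fact \(\Bin(3,4/5)\) is exactly the \(W\) of your induction step with \(b=2/5\), \(r'=3\), \(s'=0\). There the lemma survives only because \(n=8\) forces \(j\ge 4>r'\), which is precisely the information your reduction throws away. Your fallback via \(j\,\Pp(W=j)=\sum_i\pi_i\Pp(W_i=j-1)\) targets the same false statement, so it cannot be completed either.

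\textbf{The missing idea} is to use the parameter structure of \(W\), not just its mean. Write \(W=I+J\) with \(I\sim\Bin(r',2b)\) and \(J\sim\Bin(s',b)\) independent, and condition on \(I=i\).
\begin{itemize}
\item Since \(i\le r'\) and \(j\ge(n-1)/2=r'+(s'+1)/2\), the index \(k=j-i\) satisfies \(k\ge(s'+1)/2>s'/2\). In particular \(k\ge1\).
\item For \(1\le k\le s'\), the integer inequality \(2k\ge s'+1\) gives \(s'-k+1\le k\), and \(b\le1/2\) gives \(b/(1-b)\le1\). Hence \(\Pp(J=k)/\Pp(J=k-1)=\frac{s'-k+1}{k}\cdot\frac{b}{1-b}\le1\).
\item For \(k\ge s'+1\) we have \(\Pp(J=k)=0\), so the inequality holds trivially.
\item Multiplying by \(\Pp(I=i)\) and summing over \(i\) yields \(\Pp(W=j)\le\Pp(W=j-1)\), which closes your induction step.
\end{itemize}
This is the paper's argument. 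The \(2b\)-summands are handled only through the crude bound \(I\le r'\), and monotonicity is needed only for a genuine binomial with parameter \(b\le1/2\), where it holds beyond \(s'/2\).
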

\begin{proof}
We use induction on \(r\), proving the assertion for every nonnegative \(s\). When \(r=0\), the two sides are equal.

Suppose \(r\ge1\). Write \(Z=W+Q\), where \(Q\sim\Bin(1,2b)\), \(W=I+J\), \(I\sim\Bin(r-1,2b)\), and \(J\sim\Bin(s,b)\), with these variables independent. Let \(Q'\sim\Bin(2,b)\) also be independent of \(W\). We first show that replacing \(Q\) by \(Q'\) cannot decrease the tail at \(t\).

For a fixed value \(w\) of \(W\), independence gives conditional tail probabilities \(\Pp(Q\ge t-w)\) and \(\Pp(Q'\ge t-w)\). If \(w\ge t\), both sums already reach \(t\), so both probabilities are one. If \(w\le t-3\), then \(w+Q\le w+2\le t-1\) and \(w+Q'\le w+2\le t-1\), so neither sum reaches \(t\) and both probabilities are zero. Only \(w=t-1\) and \(w=t-2\) can contribute to the difference. At \(w=t-1\), the difference is \(2b-[1-(1-b)^2]=b^2\); at \(w=t-2\), it is \(0-b^2=-b^2\). Averaging these conditional differences over \(W\), each with weight \(\Pp(W=w)\), therefore gives
\[
\Pp(W+Q\ge t)-\Pp(W+Q'\ge t)
=b^2\bigl(\Pp(W=t-1)-\Pp(W=t-2)\bigr).
\]
To determine the sign, condition on \(I=i\), where \(0\le i\le r-1\), and put \(j=t-1-i\). Since \(t>r+s/2\), we have \(j\ge t-r>s/2\). For \(1\le j\le s\), the binomial mass formula gives
\[
\frac{\Pp(J=j)}{\Pp(J=j-1)}
=\frac{s-j+1}{j}\frac{b}{1-b}\le1.
\]
Indeed, the integer inequality \(j>s/2\) implies \(s-j+1\le j\), and \(b\le1/2\) implies \(b/(1-b)\le1\). If \(j=s+1\), the numerator is zero; if \(j>s+1\), both masses are zero. This also covers \(s=0\), since then \(j\ge1\). In every case \(\Pp(J=j)\le\Pp(J=j-1)\). Multiplying these inequalities by \(\Pp(I=i)\) and summing over \(i\) yields \(\Pp(W=t-1)\le\Pp(W=t-2)\). Substituting this inequality into the tail difference gives
\[
\begin{aligned}
\Pp(W+Q\ge t)-\Pp(W+Q'\ge t)
&=b^2\Pp(W=t-1)-b^2\Pp(W=t-2)\\
&\le b^2\Pp(W=t-2)-b^2\Pp(W=t-2)=0.
\end{aligned}
\]

Finally, \(W+Q'\) is the sum of independent variables with laws \(\Bin(r-1,2b)\) and \(\Bin(s+2,b)\). Since \(2(r-1)+(s+2)=n\), the same threshold satisfies the induction hypothesis. Hence \(\Pp(Z\ge t)\le\Pp(W+Q'\ge t)\le\Pp(\Bin(n,b)\ge t)\), as required.
\end{proof}

We write \(\Phi_d(b)=\Pp(\Bin(d,b)>d/2)\) for the strict-majority tail. We also record a classical binomial majority estimate. The following monotonicity argument is used in Anastos et al.~\cite[Proposition~19]{ALSS}; we include its details for the subsequent minimization.
\begin{lemma}\label{lem:binomial}
For \(0\le b\le1/2\) and every positive integer \(d\),
\(\Phi_d(b)\le b\).
Moreover, \(\Phi_{2j+1}(b)\) is nonincreasing in \(j\), and, for every integer \(j\ge0\),
\begin{equation}\label{eq:binomialdifference}
\Phi_{2j+1}(b)-\Phi_{2j+3}(b)
=\binom{2j+1}{j}[b(1-b)]^{j+1}(1-2b).
\end{equation}
\end{lemma}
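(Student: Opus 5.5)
The plan is to prove the difference identity \eqref{eq:binomialdifference} first, by a one-step coupling. Monotonicity in \(j\) then follows immediately, and the bound \(\Phi_d(b)\le b\) comes from the odd case together with a reduction of even \(d\) to odd \(d\).

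\textbf{The difference identity.} Let \(Y\sim\Bin(2j+1,b)\) and add two further independent trials \(\xi_1,\xi_2\sim\Bin(1,b)\), so that \(Y+\xi_1+\xi_2\sim\Bin(2j+3,b)\). The strict-majority events are \(\{Y\ge j+1\}\) and \(\{Y+\xi_1+\xi_2\ge j+2\}\). These events differ only in two situations.
\begin{itemize}
\item If \(Y=j+1\) and \(\xi_1=\xi_2=0\), the first event holds and the second fails. This has probability \(\binom{2j+1}{j+1}b^{j+1}(1-b)^{j}(1-b)^2\).
\item If \(Y=j\) and \(\xi_1=\xi_2=1\), the second event holds and the first fails. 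This has probability \(\binom{2j+1}{j}b^{j}(1-b)^{j+1}b^2\).
\end{itemize}
Since \(\binom{2j+1}{j+1}=\binom{2j+1}{j}\), subtracting the second probability from the first gives
\[
\binom{2j+1}{j}[b(1-b)]^{j+1}\bigl((1-b)-b\bigr),
\]
which is \eqref{eq:binomialdifference}. For \(b\le1/2\) the factor \(1-2b\) is nonnegative, so \(\Phi_{2j+1}(b)\) is nonincreasing in \(j\).

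\textbf{The bound \(\Phi_d(b)\le b\).} For odd \(d=2j+1\), monotonicity gives \(\Phi_d(b)\le\Phi_1(b)=b\). For even \(d=2j\), write \(\Bin(2j,b)\) as \(\Bin(2j-1,b)\) plus one further independent trial. A strict majority of \(2j\) trials means at least \(j+1\) successes, and this forces at least \(j\) successes among the first \(2j-1\) trials. That is exactly a strict majority of \(2j-1\) trials. Hence \(\Phi_{2j}(b)\le\Phi_{2j-1}(b)\le b\).

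There is essentially no obstacle here. The only delicate point is bookkeeping in the coupling: one must check that no other configuration changes the majority status. When \(Y\ge j+2\), both events hold. When \(Y\le j-1\), adding at most two successes gives at most \(j+1<j+2\), so both fail. In the cases \(Y\in\{j,j+1\}\), only the two listed outcomes of \((\xi_1,\xi_2)\) change the status.
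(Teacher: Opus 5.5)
Your proof is correct and follows the paper's argument: you use the same two-extra-trial coupling for the odd identity, and read off monotonicity and the odd-degree bound from it. The only cosmetic difference is the even case, where you drop a trial to compare \(\Phi_{2j}\) with \(\Phi_{2j-1}\), whereas the paper adds one to compare with \(\Phi_{2j+1}\); both inclusions are valid.
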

\begin{proof}
Let \(Y\sim\Bin(2j+1,b)\), and let \(B_1,B_2\sim\Bin(1,b)\), all independently. Then \(Y+B_1+B_2\sim\Bin(2j+3,b)\). The events defining the two tails are \(A=\{Y\ge j+1\}\) and \(A'=\{Y+B_1+B_2\ge j+2\}\).

The event \(A\setminus A'\) occurs precisely when \(Y=j+1\) and \(B_1=B_2=0\). Indeed, any larger value of \(Y\), or either new success, reaches the new threshold. Conversely, \(A'\setminus A\) occurs precisely when \(Y=j\) and \(B_1=B_2=1\): a smaller value of \(Y\) cannot reach \(j+2\) with only two additional trials. Thus
\[
\begin{aligned}
\Phi_{2j+1}(b)-\Phi_{2j+3}(b)
&=\Pp(Y=j+1)(1-b)^2-\Pp(Y=j)b^2\\
&=\binom{2j+1}{j+1}b^{j+1}(1-b)^{j+2}
-\binom{2j+1}{j}b^{j+2}(1-b)^{j+1}.
\end{aligned}
\]
The binomial coefficients are equal, and factoring proves~\eqref{eq:binomialdifference}. Its right-hand side is nonnegative for \(b\le1/2\), so the odd tails are nonincreasing and bounded by \(\Phi_1(b)=b\).

For even degree \(2j\), with \(j\ge1\), take independent \(Y\sim\Bin(2j,b)\) and \(B\sim\Bin(1,b)\). The event \(Y\ge j+1\) is contained in \(Y+B\ge j+1\). The latter is exactly the strict-majority event for \(2j+1\) trials. Therefore \(\Phi_{2j}(b)\le\Phi_{2j+1}(b)\le b\), completing the proof.
\end{proof}

\needspace{8\baselineskip}
\subsection{A general survival bound}\label{sec:selectedvertex}
\begin{lemma}[Survival under random matchings]\label{lem:survival}
Use the block sampling and deletion of Section~\ref{sec:sampling}
with any $0<b\le1/2$ and any random collection $\mathcal M$ of
disjoint vertex pairs, retaining the notation $X,S_0,q_v$.
Then $S_0$ is majority-stable, every sink survives with probability
$b$, and every vertex $v$ of outdegree $d\ge1$ satisfies
$\Pp(v\in S_0)\ge L_{d,b}(q_v)$, where
\begin{equation}\label{eq:Ldq}
\begin{aligned}
L_{d,b}(q)={}&b(1-q)\,\Pp\bigl(\Bin(d,b)\le\floor{d/2}\bigr)+bq\,\Pp\bigl(\Bin(d-1,b)\le\floor{d/2}\bigr).
\end{aligned}
\end{equation}
For each $d\ge1$, the function $L_{d,b}$ is nondecreasing on $[0,1]$.
\end{lemma}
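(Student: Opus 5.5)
Majority-stability of \(S_0\) is immediate from~\eqref{eq:deletion}. If \(v\in S_0\), then \(v\in X\) and \(\abs{N_D^+(v)\cap S_0}\le\abs{N_D^+(v)\cap X}\le k(v)\), because \(S_0\subseteq X\). For a sink, \(k(v)=0\) and the condition \(\abs{N^+(v)\cap X}\le0\) holds trivially, so \(v\in S_0\) exactly when \(v\in X\). This happens with probability \(b\) by the selection-probability computation, which is valid for any random collection of disjoint pairs.

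For \(d\ge1\), I condition on a fixed configuration \(\mathcal M=M\) and then on \(v\in X\). This conditioning has probability \(b\). As explained in Section~\ref{sec:paircomparison}, the blocks other than \(B_v\) keep their laws and their independence. I split into two cases according to whether \(P_v\) holds in \(M\).

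\emph{Case 1: \(v\) is not paired with an outneighbor.} Apply the block classification to \(A=N_D^+(v)\). If \(v\) has a mate, that mate is not in \(A\), so every block meeting \(A\) differs from \(B_v\). Hence \(\abs{A\cap X}\) has the law of \(Z\) with \(2r+s=d\). Lemma~\ref{lem:pairtail} with \(t=k(v)+1>d/2\) gives \(\Pp(\abs{A\cap X}>k(v))\le\Pp(\Bin(d,b)>\floor{d/2})\).

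\emph{Case 2: \(v\) is paired with an outneighbor \(w\).} Then \(w\notin X\). Set \(A=N_D^+(v)\setminus\{w\}\), so \(\abs A=d-1\), and the blocks meeting \(A\) avoid \(B_v\). Lemma~\ref{lem:pairtail} applies with \(n=d-1\) and \(t=\floor{d/2}+1\), since \(t>d/2>(d-1)/2\). This gives the bound \(\Pp(\Bin(d-1,b)>\floor{d/2})\).

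Multiplying each case by \(b\) and averaging over \(M\) with weights \(1-q_v\) and \(q_v\) yields exactly \(\Pp(v\in S_0)\ge L_{d,b}(q_v)\).

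Monotonicity of \(L_{d,b}\) is linear in \(q\). Its slope is \(b\) times the difference \(\Pp(\Bin(d-1,b)\le\floor{d/2})-\Pp(\Bin(d,b)\le\floor{d/2})\). This difference is nonnegative by the standard coupling \(\Bin(d,b)=\Bin(d-1,b)+\Bin(1,b)\), which stochastically dominates \(\Bin(d-1,b)\).

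The only delicate point is verifying that the threshold \(t\) exceeds half of \(n\) in Case 2, so that Lemma~\ref{lem:pairtail} is applicable. The check \(t=\floor{d/2}+1>d/2>(d-1)/2\) settles this. The rest is bookkeeping over the conditioning.
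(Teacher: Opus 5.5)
Your proof is correct and follows the paper's argument. You fix the configuration, condition on \(v\in X\), discard the protected mate when \(P_v\) holds, apply Lemma~\ref{lem:pairtail} with \(n\in\{d-1,d\}\) and threshold \(\floor{d/2}+1>n/2\), average over \(\mathcal M\), and treat monotonicity, sinks and stability just as the paper does, which merely merges your two cases into one choice of \(n\).
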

\begin{proof}
Fix a matching configuration \(M\). The vertex \(v\) is selected with probability \(b\). Conditional on its selection, its mate, if any, is absent, while all other blocks retain their independent sampling distributions. Put \(n=d-1\) if \(M\) protects \(v\), and \(n=d\) otherwise. By Section~\ref{sec:paircomparison}, the selected count among the remaining outneighbors is the sum of independent variables with laws \(\Bin(r,2b)\) and \(\Bin(s,b)\), where \(2r+s=n\). Since the deletion threshold \(\floor{d/2}+1\) is strictly greater than \(n/2\), Lemma~\ref{lem:pairtail} gives
\[
\Pp(v\in S_0\mid\mathcal M=M)
\ge b\,\Pp\bigl(\Bin(n,b)\le\floor{d/2}\bigr).
\]
Averaging over \(M\) proves the stated bound. Removing one independent Bernoulli trial cannot decrease the probability of being at most a fixed threshold, so \(L_{d,b}(q)\) is nondecreasing in \(q\). Majority stability and the assertion for sinks follow directly from the definition of \(S_0\).
\end{proof}

For the remainder of the analysis, set \(b=4/9\) and write \(L_d(q)=L_{d,4/9}(q)\). Lemma~\ref{lem:survival} gives
\[
\Pp(v\in S_0)\ge L_d(q_v)\ge L_d(q)
\qquad\text{whenever }d=d(v)\ge1\text{ and }0\le q\le q_v.
\]
Here \(q_v\) is the actual protection probability; \(q\) is any convenient lower bound on it. Section~\ref{sec:sampling} gives \(q_v\ge1/3\) on every chosen cycle and \(q_v\ge1/2\) on every even chosen cycle. On a selected digon, \(q_v=1\), although the same common lower bounds remain valid. We will also apply Lemma~\ref{lem:survival} to path matchings in Appendix~\ref{app:tournament}.

For a vertex \(v\in R\) of positive outdegree, its block is always a singleton, so \(q_v=0\). Lemmas~\ref{lem:survival} and~\ref{lem:binomial} yield
\[
\Pp(v\in S_0)\ge L_d(0)
=b\bigl(1-\Phi_d(b)\bigr)\ge b(1-b)=\frac{20}{81}.
\]
A sink survives with probability \(b\ge20/81\). Thus we may take \(p_R=20/81\).

\subsection{Minimizing over all outdegrees}\label{sec:degreemin}
We now minimize \(L_d(q)\) over the original outdegree \(d\). Keep \(b=4/9\), and write
\[
R_j=\binom{2j}{j}\left(\frac{20}{81}\right)^j
=\Pp\bigl(\Bin(2j,b)=j\bigr).
\]
This abbreviation is used only in the degree calculations. For odd degree \(d=2j+1\), the permitted count is \(j\). Without protection the bound is \(b(1-\Phi_{2j+1}(b))\). To compute the gain from protection, take independent \(Y\sim\Bin(2j,b)\) and \(B\sim\Bin(1,b)\). Adding the trial \(B\) changes the event that the count is at most \(j\) precisely when \(Y=j\) and \(B=1\). Therefore
\[
\Pp(Y\le j)-\Pp(Y+B\le j)=b\Pp(Y=j)=bR_j.
\]
Multiplying by the selection probability \(b\) and the protection weight \(q\) gives
\begin{equation}\label{eq:Lodd}
L_{2j+1}(q)=b\bigl(1-\Phi_{2j+1}(b)\bigr)+qb^2R_j.
\end{equation}
We compare successive odd degrees using~\eqref{eq:binomialdifference} and the ratios of the binomial coefficients:
\[
\begin{aligned}
\Phi_{2j+1}(b)-\Phi_{2j+3}(b)
&=\frac{2j+1}{j+1}b(1-b)(1-2b)R_j,\\
R_{j+1}&=\frac{2(2j+1)}{j+1}b(1-b)R_j.
\end{aligned}
\]
Indeed, the coefficient ratios are \(\binom{2j+1}{j}/\binom{2j}{j}=(2j+1)/(j+1)\) and \(\binom{2j+2}{j+1}/\binom{2j}{j}=2(2j+1)/(j+1)\). Subtracting the two instances of~\eqref{eq:Lodd} gives
\(b(\Phi_{2j+1}-\Phi_{2j+3})+qb^2(R_{j+1}-R_j)\).
Since \((1-b)(1-2b)=5/81\) and \(2b(1-b)=40/81\), substitution yields
\begin{equation}\label{eq:recurrence}
\begin{aligned}
L_{2j+3}(q)-L_{2j+1}(q)
&=\frac{16R_j}{81}\left[\frac{5(2j+1)}{81(j+1)}
+q\left(\frac{40(2j+1)}{81(j+1)}-1\right)\right]\\
&=\frac{16R_j}{6561(j+1)}\bigl((10-q)j+5-41q\bigr).
\end{aligned}
\end{equation}
Every factor outside the final bracket is positive. The comparison between odd degrees reduces to the sign of a linear expression in \(j\).

Even degrees cannot give a smaller value than the preceding odd degree. For \(i=0,1\), take independent \(Y\sim\Bin(2m-1-i,b)\) and \(B\sim\Bin(1,b)\). If \(Y\le m-1\), then \(Y+B\le m\). Hence
\[
\Pp\bigl(\Bin(2m-i,b)\le m\bigr)
\ge\Pp\bigl(\Bin(2m-1-i,b)\le m-1\bigr).
\]
For \(i=0\) and \(i=1\), these are exactly the two probabilities used in~\eqref{eq:Ldq}. Multiplying by \(b(1-q)\) and \(bq\), respectively, and adding shows that
\(L_{2m}(q)\ge L_{2m-1}(q)\).

We first use \(q=1/3\), the common protection guarantee on all chosen cycles.
\begin{lemma}\label{lem:degreemin}
With \(b=4/9\) as above,
\begin{equation}\label{eq:cycleminimum}
\min_{d\ge1}L_d(1/3)=L_3(1/3)=p_C:=\frac{5740}{19683}.
\end{equation}
\end{lemma}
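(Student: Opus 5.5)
The plan is to reduce the minimization to odd outdegrees, order the odd values using the recurrence~\eqref{eq:recurrence}, and then evaluate \(L_3(1/3)\) explicitly.

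\emph{Reduction to odd degrees.} The comparison just established gives \(L_{2m}(q)\ge L_{2m-1}(q)\) for every \(m\ge1\) and every \(q\). So every even degree is dominated by the odd degree below it, and it suffices to show that \(L_3(1/3)\le L_{2j+1}(1/3)\) for all \(j\ge0\).

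\emph{Ordering the odd degrees.} Substitute \(q=1/3\) into~\eqref{eq:recurrence}. The final bracket becomes
\[
\Bigl(10-\tfrac13\Bigr)j+5-\tfrac{41}{3}=\frac{29j-26}{3}.
\]
All factors outside the bracket are positive. At \(j=0\) the bracket is negative, so \(L_3(1/3)<L_1(1/3)\). For every \(j\ge1\) it is positive, so \(L_{2j+3}(1/3)>L_{2j+1}(1/3)\). Hence the odd subsequence decreases from \(d=1\) to \(d=3\) and is strictly increasing from \(d=3\) on, and its minimum is attained at \(d=3\). Together with the reduction, \(\min_{d\ge1}L_d(1/3)=L_3(1/3)\).

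\emph{Evaluating \(L_3(1/3)\).} Use~\eqref{eq:Lodd} with \(j=1\), so that \(L_3(1/3)=b(1-\Phi_3(b))+\tfrac13 b^2R_1\) with \(b=4/9\).
\begin{itemize}
\item First, \(\Phi_3(b)=3b^2(1-b)+b^3=(240+64)/729=304/729\).
\item Hence \(b(1-\Phi_3(b))=\tfrac49\cdot\tfrac{425}{729}=1700/6561=5100/19683\).
\item Next, \(R_1=2\cdot\tfrac{20}{81}=\tfrac{40}{81}\), so \(\tfrac13 b^2R_1=\tfrac13\cdot\tfrac{16}{81}\cdot\tfrac{40}{81}=640/19683\).
\end{itemize}
Summing gives \(5740/19683\), which is the claimed value \(p_C\).

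No step is a real obstacle, since the recurrence was already derived. The only care needed is the sign analysis of the linear bracket, in particular noting that it is negative only at \(j=0\), and the exact arithmetic in the final evaluation.
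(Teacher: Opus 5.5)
Your proposal is correct and matches the paper's proof: you reduce to odd degrees using $L_{2m}(q)\ge L_{2m-1}(q)$, read off the sign of the bracket $(29j-26)/3$ in~\eqref{eq:recurrence}, and evaluate $L_3(1/3)$ from~\eqref{eq:Lodd}. Your arithmetic $5100/19683+640/19683=5740/19683$ agrees with the paper's closed form $b-\frac73b^3+\frac43b^4$ at $b=4/9$.
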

\begin{proof}
For \(q=1/3\), the final bracket in~\eqref{eq:recurrence} is \((29j-26)/3\). It is negative at \(j=0\) and positive for every \(j\ge1\). Thus the odd-degree values decrease from degree one to degree three and increase thereafter. The even-degree comparison shows that the minimum over all positive degrees is attained at degree three.

To evaluate it, \(\Phi_3(b)=3b^2(1-b)+b^3=3b^2-2b^3\) and \(R_1=2b(1-b)\). Substitution into~\eqref{eq:Lodd} gives
\[
L_3(1/3)=b(1-\Phi_3(b))+\frac{b^2R_1}{3}
=b-\frac73b^3+\frac43b^4
=\frac{5740}{19683},
\]
where the last equality uses \(b=4/9\).
\end{proof}

\begin{proof}[Proof of Theorem~\ref{thm:general}]
Use the construction of Section~\ref{sec:sampling} with \(b=4/9\). Lemma~\ref{lem:survival} and the degree estimates above give the survival guarantees
\[
p_R=\frac{20}{81},\qquad p_C=\frac{5740}{19683},\qquad
p_C-p_R=\frac{880}{19683}>0.
\]
The mixture~\eqref{eq:mixture} therefore applies. It uses \(S_0\) with probability \(\lambda=19683/21443\), and gives every vertex inclusion probability at least
\[
p=\lambda p_C=\frac{5740}{21443}=\frac{140}{523}.
\]
By Lemma~\ref{lem:distribution},
\begin{equation}\label{eq:universalconstant}
\chif(D)\le U:=\frac{1+2(p_C-p_R)}{p_C}=\frac{523}{140}<3.736.
\end{equation}
If \(C\) is empty or \(R\) is empty, the same mixture remains valid, though in those cases a more efficient distribution is available.
\end{proof}

\section{Cycle factors and even cycle factors}\label{sec:factors}
When the selected cycles form a factor, every vertex is protected with probability at least \(1/3\), and the correction on \(R\) disappears. The distinction between even and odd cycles enters only through the protection probability. Having an even cycle factor does not exclude other, odd directed cycles. Thus the even-factor hypothesis differs from the absence of odd directed cycles, for which Anastos et al.~\cite{ALSS} proved the stronger conclusion of majority two-choosability.

\begin{proof}[Proof of the cycle-factor assertion of Theorem~\ref{thm:factors}]
Use the sampling construction with \(b=4/9\) and the given cycle factor. There is no remainder, so Lemmas~\ref{lem:survival} and~\ref{lem:degreemin} give \(\Pp(v\in S_0)\ge p_C=5740/19683\) for every vertex. Returning \(S_0\) without the mixture and applying Lemma~\ref{lem:distribution} yields \(\chif(D)\le1/p_C=19683/5740<3.430\).
\end{proof}

For an even cycle factor, every vertex has protection probability at least \(1/2\). We retain \(b=4/9\) and take \(q=1/2\) in the same survival bound \(L_d(q)\). The recurrence~\eqref{eq:recurrence} becomes
\begin{equation}\label{eq:evenrecurrence}
L_{2j+3}(1/2)-L_{2j+1}(1/2)
=\frac{8R_j}{6561(j+1)}(19j-31).
\end{equation}
The factor \(19j-31\) is negative at \(j=0,1\) and positive for every \(j\ge2\). Thus the odd-degree values decrease through degrees one, three and five, then increase. The even-degree comparison in Section~\ref{sec:degreemin} shows that the minimum over all positive degrees is attained at degree five.

To evaluate it, the binomial formula gives \(\Phi_5(b)=10b^3(1-b)^2+5b^4(1-b)+b^5=10b^3-15b^4+6b^5\), and \(R_2=6b^2(1-b)^2\). Equation~\eqref{eq:Lodd} therefore gives
\begin{equation}\label{eq:evenminimum}
\begin{aligned}
\min_{d\ge1}L_d(1/2)=L_5(1/2)
&=b(1-\Phi_5(b))+\frac{b^2R_2}{2}\\
&=b-7b^4+9b^5-3b^6=\frac{53900}{177147},
\end{aligned}
\end{equation}
where the final equality uses \(b=4/9\).

\begin{proof}[Proof of the even-cycle assertion of Theorem~\ref{thm:factors}]
Use the given even cycle factor and \(b=4/9\). Every vertex is protected with probability at least \(1/2\), so Lemma~\ref{lem:survival} and~\eqref{eq:evenminimum} give \(\Pp(v\in S_0)\ge53900/177147\). There is no remainder to correct. Lemma~\ref{lem:distribution} yields \(\chif(D)\le177147/53900<3.287\).
\end{proof}

\begin{corollary}\label{cor:regular}
Every \(r\)-regular digraph \(D\) with \(r\ge1\) satisfies \(\chif(D)<3.430\).
\end{corollary}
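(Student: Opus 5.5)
The plan is to reduce Corollary~\ref{cor:regular} to the cycle-factor assertion of Theorem~\ref{thm:factors}. That theorem gives \(\chif(D)\le19683/5740<3.430\) as soon as \(D\) has a directed cycle factor. So it suffices to show that every \(r\)-regular digraph with \(r\ge1\) has such a factor.

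For this I would use the standard bipartite double cover. Build the bipartite graph \(H\) with vertex classes \(V^+=\{v^+:v\in V(D)\}\) and \(V^-=\{v^-:v\in V(D)\}\), and put an edge \(u^+v^-\) for each arc \(u\to v\) of \(D\). Since \(D\) has no parallel arcs, \(H\) is a simple bipartite graph. Because every outdegree and every indegree equals \(r\ge1\), \(H\) is \(r\)-regular.

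By Hall's theorem, or equivalently K\H{o}nig's theorem for regular bipartite graphs, \(H\) has a perfect matching. Concretely, for \(A\subseteq V^+\) there are exactly \(r\abs A\) edges leaving \(A\). Each vertex of \(N_H(A)\) absorbs at most \(r\) of them, so \(\abs{N_H(A)}\ge\abs A\).

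A perfect matching defines a map \(\sigma\colon V(D)\to V(D)\) with \(v\to\sigma(v)\) an arc for every \(v\). This map is a bijection, since each \(w^-\) is matched exactly once. A permutation decomposes into disjoint cycles. Since \(D\) is loopless, \(\sigma\) has no fixed points, so each cycle of \(\sigma\) has length at least two and is a directed cycle of \(D\); a \(2\)-cycle of \(\sigma\) is a digon formed by opposite arcs, which are allowed. These cycles partition \(V(D)\), so they form a directed cycle factor.

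Applying the first assertion of Theorem~\ref{thm:factors} with this factor then yields \(\chif(D)\le19683/5740<3.430\).

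No step is a real obstacle. The only points needing care are that looplessness rules out fixed points of \(\sigma\), and that digons count as directed cycles of length two. The construction in Section~\ref{sec:sampling} explicitly allows digons, with protection probability one, so the factor hypothesis is met in that case too.
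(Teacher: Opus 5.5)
Your proof is correct and follows the paper's route exactly: the paper also obtains the corollary from the cycle-factor assertion of Theorem~\ref{thm:factors}, using the Hall's-theorem fact that every $r$-regular digraph with $r\ge1$ has a directed cycle factor, which it cites as standard without proof. Your bipartite double-cover argument, with the remarks on looplessness and digons, correctly fills in that step.
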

\begin{proof}
The standard cycle-factor consequence of Hall's theorem states that every \(r\)-regular digraph with \(r\ge1\) has a directed cycle factor. Applying Theorem~\ref{thm:factors} gives the bound.
\end{proof}

\paragraph{An application to tournaments.}
\begin{theorem}\label{thm:tournament}
Every tournament \(T\) satisfies \(\chif(T)\le45927/13820<3.324\).
\end{theorem}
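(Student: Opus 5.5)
We plan to deduce Theorem~\ref{thm:tournament} from Lemma~\ref{lem:survival} applied to \emph{path} matchings, with no acyclic remainder to correct. By R\'edei's theorem, \(T\) has a Hamilton path \(v_1\to v_2\to\cdots\to v_n\). The random matching chooses, with probability \(1/2\) each, one of the two ways of pairing consecutive vertices along this path. When \(n\) is odd, each way leaves an unmatched vertex at one end, which becomes a singleton block. Every internal vertex \(v_i\), \(i<n\), has its outgoing path arc \(v_i\to v_{i+1}\) in exactly one of the two pairings. Hence \(q_{v_i}\ge 1/2\), and by~\eqref{eq:evenminimum} such a vertex survives with probability at least \(L_5(1/2)=53900/177147\approx0.3043\). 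This exceeds the target \(13820/45927\approx0.3009\).

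The only vertex without guaranteed protection is the terminal vertex \(v_n\). Its only path arc enters it, so \(q_{v_n}\) may be \(0\). Its survival bound is therefore \(L_{d}(0)=b(1-\Phi_d(b))\) with \(d=d(v_n)\), and this is poor when \(d\) is small: it equals \(20/81\) for \(d=1,2\). This is where the outdegree-dependent choice of the endpoint enters. We will use two facts: a tournament has a vertex of outdegree at least \((n-1)/2\), and for many vertices \(w\) there is a Hamilton path ending at \(w\). The latter holds, for example, by taking a Hamilton path of \(T-w\) that ends in an inneighbor of \(w\), using strong connectivity pieces, or a king argument. We will choose the path so that its terminal vertex has large outdegree. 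For odd \(n\) there is also the option of making the unmatched vertex a different endpoint.

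We will then compute \(L_d(0)\) from Lemma~\ref{lem:binomial}, using the odd-degree monotonicity of \(\Phi_{2j+1}\) and the even-versus-odd comparison of Section~\ref{sec:degreemin}. This determines the degree threshold \(d_0\) above which \(L_d(0)\ge 13820/45927\). Small tournaments, whose maximum outdegree falls below \(d_0\), and tournaments where no suitable endpoint exists will be handled separately. We will \emph{mix} with probability between \(S_0\) and a few explicit majority-stable sets containing the bad endpoint, exactly as in~\eqref{eq:mixture}. The denominator \(45927=7\cdot3^8\) suggests that the extremal case balances an unprotected vertex of small odd outdegree against \(L_5(1/2)\) through such a mixture. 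We would therefore first try to identify that equalizing case and derive the constant from it. The remaining step is to apply Lemma~\ref{lem:distribution}.

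The main obstacle is controlling the terminal vertex. We must show that one can always arrange, possibly by randomizing over several Hamilton paths or endpoint rules, that every vertex is either protected with probability at least \(1/2\) or has outdegree large enough. Equivalently, the worst combination must give exactly \(13820/45927\). We would first try averaging over two Hamilton paths with different terminal vertices, so that every vertex is terminal with probability at most \(1/2\). Its survival bound is then at least \(L_d(1/4)\), and we would minimize this over \(d\) using~\eqref{eq:recurrence} with \(q=1/4\).
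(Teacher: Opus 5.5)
You have a genuine gap: neither of your endpoint strategies reaches \(13820/45927\). The missing ingredient is an endpoint probability that depends on the outdegree.

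\textbf{Your deterministic plan fails.} Every Hamilton path of \(T\) must end in the terminal strong component \(K\), and each vertex of \(K\) is such an endpoint. Vertices of \(K\) have the same outdegree as inside \(K\), so the vertex of outdegree at least \((n-1)/2\) is usually not an admissible endpoint. Suppose \(K\) is a directed triangle. Then every possible endpoint \(z\) has outdegree one and is paired only with its inneighbor \(v_{n-1}\). With a fixed path, \(z\) survives with probability exactly \(b(1-b)=20/81\).

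Your mixing repair is not specified. Its natural form, mixing \(S_0\) with \(\{z\}\) against the guarantee \(L_5(1/2)\) for the other vertices, yields only about \(3.475\). (Also, \(L_2(0)=b(1-b^2)=260/729\), not \(20/81\).)

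\textbf{Your fallback is also insufficient.} If each vertex is the endpoint with probability at most \(1/2\), then \(q_v\ge1/4\). The recurrence then gives \(\min_dL_d(1/4)=L_3(1/4)=1860/6561\), hence only \(\chif(T)\le6561/1860\approx3.527\).

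More generally, in a path matching of a tournament \(q_v=(1-r_v)/2\) exactly, where \(r_v\) is the endpoint probability. A degree-independent guarantee \(q\) would need \(L_3(q)\ge13820/45927\), that is, \(q\ge3/7\). This forces \(r_v\le1/7\) for every \(v\in K\), which is impossible when \(3\le\abs K\le6\).

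\textbf{The missing idea} is to make the endpoint probability decrease with the outdegree.
\begin{itemize}
\item Choose \(z\in K\) with probability \(1/(Z(2d(z)+1))\), where \(Z=\sum_{v\in K}(2d(v)+1)^{-1}\).
\item Use a fixed Hamilton path ending at \(z\): a Hamilton path of \(T-K\), followed by a Hamilton cycle of \(K\) (Camion's theorem) broken at the outgoing arc of \(z\).
\item Since \(K\) spans \(\binom{s}{2}\) arcs and no arc leaves it, \(\sum_{v\in K}(2d(v)+1)=s^2\) with \(s=\abs K\). Cauchy--Schwarz then gives \(Z\ge1\).
\item Hence \(r_v\le1/(2d(v)+1)\) and \(q_v\ge d/(2d+1)\). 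A vertex of outdegree \(d\) survives with probability at least \(h_d=L_d(d/(2d+1))\).
\item Even degrees dominate the preceding odd degree. The difference \(h_{2m+1}-h_{2m-1}\) has the sign of \(76m^2-123m-55\), so the minimum is \(h_3=L_3(3/7)=13820/45927\).
\end{itemize}
The factor \(7\) in \(45927=7\cdot3^8\) is therefore the denominator of \(q=3/7\), not the trace of a mixture.
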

The proof uses two alternating matchings of a directed Hamilton path. Every vertex except the terminal vertex is protected with probability \(1/2\); choosing the terminal vertex randomly according to the outdegrees controls the loss of protection at each vertex. This gives a bound for every tournament, without a minimum-outdegree assumption. The complete construction and proof are given in Appendix~\ref{app:tournament}.

\section{Approximation algorithms}\label{sec:approximation}
We use the sampling distribution and a sharp lower bound to construct an explicit fractional coloring with a guaranteed approximation ratio. A multiplicative approximation of ratio \(\rho\ge1\) returns a value \(\widehat K\) satisfying \(\chif(D)\le\widehat K\le\rho\chif(D)\). This convention applies both to upper estimates of the optimum and to the costs of feasible fractional colorings.

\subsection{A sharp lower bound}
The dual of~\eqref{eq:lp} maximizes \(\sum_v y_v\) over nonnegative vertex weights satisfying \(\sum_{v\in S}y_v\le1\) for every majority-stable set \(S\). A weighting concentrated on one vertex and its outneighbors gives the following useful bound.
\begin{proposition}\label{prop:lower}
If \(v\) has positive outdegree \(d\), then \(\chif(D)\ge2-\floor{d/2}/d\ge3/2\). In particular, every digraph containing an arc has value at least \(3/2\), and the inequality is strict if some vertex has positive odd outdegree.
\end{proposition}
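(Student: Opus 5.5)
We use the dual of~\eqref{eq:lp} with the weighting suggested just before the statement. Fix \(v\) of outdegree \(d\ge1\), put \(k=\floor{d/2}\), and let \(N=N_D^+(v)\). We assign weight \(\alpha\) to \(v\), weight \(\beta\) to each vertex of \(N\), and zero elsewhere. By weak duality, any such weighting with \(\sum_{u\in S}y_u\le1\) for every majority-stable set \(S\) gives \(\chif(D)\ge\alpha+d\beta\).

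Next we check the constraints. A majority-stable set \(S\) either omits \(v\), in which case its weight is at most \(\abs{S\cap N}\beta\le d\beta\). Or it contains \(v\), in which case majority stability at \(v\) forces \(\abs{S\cap N}\le k\), so its weight is at most \(\alpha+k\beta\). Since loops are excluded, \(v\notin N\), so these two weight counts do not overlap. It therefore suffices to impose \(d\beta\le1\) and \(\alpha+k\beta\le1\). Taking \(\beta=1/d\) and \(\alpha=1-k/d\ge1/2\), both constraints hold with equality. The objective is then
\[
\alpha+d\beta=1-\frac{k}{d}+1=2-\frac{\floor{d/2}}{d}.
\]
Since \(\floor{d/2}\le d/2\), this is at least \(3/2\). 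Equality holds precisely when \(d\) is even, so the bound is strictly larger than \(3/2\) for odd \(d\). Every digraph containing an arc has a vertex of positive outdegree, which gives the general claim.

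No step here is a real obstacle. The only point to verify is that the two cases (\(v\in S\) or \(v\notin S\)) exhaust all majority-stable sets. The other subtlety is that constraints need only be checked on the support \(\{v\}\cup N\), because vertices outside the support carry zero weight. Weak LP duality then finishes the proof.
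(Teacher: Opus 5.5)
Your proof is correct and is essentially the paper's argument: the same dual weighting (\(1-\floor{d/2}/d\) on \(v\), \(1/d\) on each outneighbor), the same two-case check according to whether \(v\in S\), and weak duality. Your observation that the bound exceeds \(3/2\) exactly when \(d\) is odd gives the strictness claim.
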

\begin{proof}
Put \(k=\floor{d/2}\). Give \(v\) weight \((d-k)/d\), each of its \(d\) outneighbors weight \(1/d\), and every other vertex weight zero. These vertices are distinct because there are no loops. A majority-stable set omitting \(v\) has weight at most one. A set containing \(v\) contains at most \(k\) of its outneighbors, so its weight is at most \((d-k)/d+k/d=1\). The weighting is dual feasible and has total weight \(2-k/d\). Weak duality proves the claim.
\end{proof}
The bound is sharp: in the digraph with arcs \(v\to x\) and \(v\to y\) and no others, all three two-vertex subsets are majority-stable. Giving each weight \(1/2\) proves that the value is \(3/2\). A nonempty edgeless digraph has value one; the empty digraph has value zero.

\subsection{Constructing an explicit fractional coloring}\label{sec:positiveapprox}
Let \(U=523/140\) be the constant in~\eqref{eq:universalconstant}, and put
\begin{equation}\label{eq:approxratio}
\alpha=\frac{2U}{3}=\frac{523}{210}<2.491.
\end{equation}
For a digraph containing an arc, Theorem~\ref{thm:general} and Proposition~\ref{prop:lower} give \(\chif(D)\le U\le\alpha\chif(D)\). Thus returning \(U\), and the exact values zero or one in the empty and nonempty edgeless cases, gives a deterministic polynomial-time value estimate with ratio \(\alpha\). The sampling distribution yields the following constructive guarantee.
\begin{corollary}\label{cor:valueapprox}
For every fixed rational $\varepsilon>0$, there is a randomized algorithm that returns a feasible fractional majority coloring of cost at most $(\alpha+\varepsilon)\chif(D)$ and runs in expected polynomial time.
\end{corollary}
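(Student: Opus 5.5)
We will build the explicit coloring directly from the random construction of Section~\ref{sec:construction}, estimating all needed probabilities empirically. Then we will certify feasibility exactly by solving a small linear program. The edgeless and empty cases are trivial: return the single set \(V(D)\) with weight one, or nothing. So assume \(D\) has an arc, so that \(\chif(D)\ge3/2\) by Proposition~\ref{prop:lower}.

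The first step is deterministic preprocessing. We compute an inclusionwise maximal family of disjoint directed cycles greedily: repeatedly find a directed cycle in the remaining digraph and delete it. This leaves an acyclic remainder \(R\), and we compute the greedy two-coloring \(R_0,R_1\) in reverse topological order. All of this is polynomial.

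The second step is sampling. We draw \(N\) independent copies \(S^{(1)},\dots,S^{(N)}\) of \(S_0\) by the procedure of Section~\ref{sec:sampling}, each in linear time. Together with \(R_0\) and \(R_1\), these form a polynomial family \(\mathcal F\) of majority-stable sets. We then solve the restricted linear program~\eqref{eq:lp} over \(\mathcal F\) exactly, by a polynomial-time LP algorithm with rational data. If its optimum is at most \((U+\varepsilon')\), with \(\varepsilon'=3\varepsilon/2\), we return it. Otherwise we resample, doubling \(N\).

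Correctness of any returned solution is immediate. It is feasible, and its cost is at most \(U+3\varepsilon/2\le(\alpha+\varepsilon)\chif(D)\), using \(\chif(D)\ge3/2\) and \(\alpha=2U/3\).

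The main obstacle is bounding the expected running time, i.e.\ showing that one round with \(N=\mathrm{poly}(n)\) succeeds with probability at least a constant. For this we use the specific candidate weighting from the proof of Theorem~\ref{thm:general}. Give each sample weight \(\lambda c/N\) and each \(R_i\) weight \((1-\lambda)c/2\), where \(c=(1+\eta)/p\) and \(\eta>0\) is chosen so that \((1+\eta)U\le U+\varepsilon'\).

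The coverage of a vertex \(v\) is then
\[
c\Bigl(\lambda\,\hat p_v+(1-\lambda)\tfrac12[v\in R]\Bigr),
\]
where \(\hat p_v\) is the empirical frequency of \(v\) among the samples. Its mean is \(\Pp(v\in S_0)\), which is at least \(p_C\) or \(p_R\) according to whether \(v\in C\) or \(v\in R\), by Lemma~\ref{lem:survival} and Lemma~\ref{lem:degreemin}.

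By Hoeffding's inequality, \(\hat p_v\ge\Pp(v\in S_0)-\eta p_R/2\) fails with probability at most \(\exp(-\Theta(\eta^2N))\). A union bound over the \(n\) vertices shows that \(N=O(\eta^{-2}\log n)\) suffices for failure probability below \(1/2\). On this event every vertex has coverage at least one, by the equalization in~\eqref{eq:mixture} after slightly shrinking \(\eta\).

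The LP optimum over \(\mathcal F\) is therefore at most \(c=(1+\eta)U\) with probability at least \(1/2\). The number of rounds is geometric, so the expected time is polynomial, since \(\varepsilon\) is fixed.
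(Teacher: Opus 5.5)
Your proof is correct, but it takes a different route from the paper.

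\textbf{What the paper does.} It draws \(m=1+\floor{n/(2\Delta^2)}\) samples from the full mixture, so \(R_0\) and \(R_1\) appear only as random outputs of the sampler. It gives every sampled set the uniform weight \(1/C_{\min}\) and accepts when \(C_{\min}\ge m/T\). The analysis is Chebyshev plus a union bound, and no optimization is performed.

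\textbf{What you do instead.} You place \(R_0,R_1\) in the family deterministically, sample only \(S_0\), and re-optimize the restricted LP. The derandomized mixture weighting serves only as a feasibility witness, and the check that it covers every vertex goes through for any \(\eta\le1\).

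\textbf{What each route buys.} Your route has exponential (Hoeffding) tails, so \(N=O(\eta^{-2}\log n)\) samples suffice instead of \(O(n(1+\varepsilon^{-2}))\). The returned cost can also only be at most that of the witness. The paper's route is more elementary and avoids an exact LP solver. In fact your LP step is dispensable: the witness coverage \(c\bigl(\lambda\hat p_v+\tfrac{1-\lambda}{2}[v\in R]\bigr)\) is computable from the sample, so you could test it directly and return the witness. That is essentially the paper's scheme with the \(R\)-branch derandomized and Chebyshev replaced by Hoeffding.

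\textbf{One point to fix: the doubling of \(N\).} As written, ``the number of rounds is geometric'' does not justify polynomial expected time. If a round succeeds only with probability bounded below by \(1/2\), and the round cost grows polynomially in a doubling \(N\), the expected cost is a divergent sum. Either remedy below works.
\begin{itemize}
\item Fix \(N\) at the explicitly computable value \(N^*\) with \(n\exp(-N^*\eta^2p_R^2/2)<1/2\). This is possible because \(\eta\), \(p_R\) and \(n\) are known.
\item Keep the doubling, and observe that your Hoeffding bound makes the \(j\)th round beyond \(N^*\) fail with probability at most \(n(2n)^{-2^j}\le2^{-2^j}\). This doubly exponential decay dominates the polynomial growth of the LP cost.
\end{itemize}
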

\begin{proof}
The randomized construction below has the stated approximation guarantee and expected polynomial running time, as proved in Appendix~\ref{app:algorithm}.
\end{proof}

In the same spirit as the sampling arguments of Alth\"ofer~\cite{Althofer} and Lipton and Young~\cite{LiptonYoung}, we give the following randomized construction of an explicit fractional majority coloring. The empty digraph needs no sets; for a nonempty edgeless digraph, give \(V(D)\) weight one. Otherwise let \(n=\abs{V(D)}\). Fix a maximal family of disjoint directed cycles and the two classes of its acyclic remainder once and for all. Use the sampler from Theorem~\ref{thm:general}, including the mixture~\eqref{eq:mixture}, with \(b=4/9\). A sampled majority-stable set \(S\) then satisfies \(\Pp(v\in S)\ge p:=1/U=140/523\) for every vertex. For the desired accuracy, choose the target total weight $T$ and the sampling parameters as follows:
\begin{equation}\label{eq:batchparameters}
T=U+\frac{3\varepsilon}{2},\qquad
\Delta=p-\frac1T>0,\qquad
m=1+\left\lfloor\frac{n}{2\Delta^2}\right\rfloor.
\end{equation}
\begin{enumerate}
\item Sample \(m\) independent sets \(S_1,\ldots,S_m\). Let \(C_v=\abs{\{i:v\in S_i\}}\) and \(C_{\min}=\min_v C_v\).
\item If \(C_{\min}<m/T\), discard the batch and repeat the first step using fresh randomness.
\item Otherwise give each sampled set weight \(1/C_{\min}\), adding the weights when a set occurs more than once, and return the resulting fractional coloring.
\end{enumerate}
Every accepted batch covers every vertex and has total weight \(m/C_{\min}\le T\). The success probability of a batch is greater than \(1/2\), so the expected number of batches is less than two. The proof in Appendix~\ref{app:algorithm} includes the independence argument, the variance estimate and the expected polynomial running-time bound.

Any improved universal upper bound would directly strengthen both \(\alpha\) and the constructive guarantee above.

\section{Inapproximability}\label{sec:complexity}
We prove Theorem~\ref{thm:complexity} by a reduction with perfect completeness: satisfiable instances give fractional majority coloring number exactly \(3/2\), whereas instances with a fixed proportion of unavoidable violations give a value bounded away from \(3/2\). The quantitative relation between these two quantities is the essential part of the argument. We use the multiplicative approximation convention of Section~\ref{sec:approximation}. An additive approximation with error \(\delta\ge0\) returns an upper estimate \(\widehat K\) satisfying \(\chif(D)\le\widehat K\le\chif(D)+\delta\).

\subsection{The source hardness theorem}
An instance \(\mathcal I\) of \(\mathrm{2NLin}(\mathbb Z_3)\) consists of a finite set of variables and a list of \(m\ge1\) constraints of the form \(e=(u,v,c_e)\), where \(u\ne v\) are variables and \(c_e\in\mathbb Z_3\). An assignment \(a\) gives each variable a label in \(\mathbb Z_3\); it satisfies \(e\) if \(a(u)-a(v)\ne c_e\). All label arithmetic is modulo three. Each constraint has a positive rational weight \(w_e\), and we normalize these weights so that \(\sum_e w_e=1\). We write
\[
\mu(\mathcal I)=\min_a\sum_{e=(u,v,c_e)}w_e\mathbf1_{\{a(u)-a(v)=c_e\}}
\]
for the minimum total weight of violated constraints. In particular, \(\mu(\mathcal I)=0\) means that the instance is satisfiable.

The hardness theorem of Austrin, O'Donnell, Tan and Wright~\cite[Theorem~1.4]{AOTW} implies that, for every fixed \(0<\eta<1/12\), it is NP-hard to distinguish
\[
\mu(\mathcal I)=0
\quad\text{from}\quad
\mu(\mathcal I)>\frac1{12}-\eta.
\]
We transfer this gap to fractional majority coloring by a construction denoted \(D_2(\mathcal I)\). We will prove that satisfiability gives value exactly \(3/2\), and that a fractional majority coloring of value \(K\) yields an assignment with total violated weight at most \(6-9/K\). Thus unavoidable violations force the fractional majority coloring number above \(3/2\).

\subsection{The construction and its soundness}\label{sec:D2}
For each variable \(v\), introduce three vertices \(x_{v,0},x_{v,1},x_{v,2}\), one for each possible label, and the two arcs \(x_{v,0}\to x_{v,1}\) and \(x_{v,0}\to x_{v,2}\). These are the only outgoing arcs of the variable vertices. A majority-stable set cannot contain all three vertices: \(x_{v,0}\) would then have both outneighbors in the set. At least one vertex of every variable triple must therefore lie in its complement. Here and below, an omitted vertex means a vertex in the complement \(J=V(D_2)\setminus S\) of the majority-stable set \(S\). We interpret the indices of the omitted vertices of a triple as a nonempty list of available labels.

For each constraint \(e=(u,v,c_e)\), introduce three further vertices \(z_{e,0},z_{e,1},z_{e,2}\), with outgoing neighborhoods
\begin{equation}\label{eq:D2}
N^+(z_{e,i})=\{x_{u,i},x_{v,i-c_e}\}\qquad(i\in\mathbb Z_3).
\end{equation}
Together with the variable arcs above, these are all the arcs of \(D_2(\mathcal I)\). Distinct endpoints ensure that the two outneighbors in~\eqref{eq:D2} are distinct.

The construction takes polynomial time: with \(n\) variables and \(m\) constraints it has \(3n+3m\) vertices and \(2n+6m\) arcs. It is a simple oriented acyclic digraph with outdegrees zero or two. Indeed, all arcs go from auxiliary vertices to variable vertices, or from a variable center \(x_{v,0}\) to its two sink leaves. This also shows that every directed path has length at most two.

\begin{proposition}\label{prop:D2}
If \(\mathcal I\) is satisfiable, then \(\chif(D_2(\mathcal I))=3/2\). For every instance,
\begin{equation}\label{eq:D2soundness}
\chif(D_2(\mathcal I))\ge\frac9{6-\mu(\mathcal I)}.
\end{equation}
\end{proposition}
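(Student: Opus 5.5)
The proof has two parts: an explicit fractional coloring for completeness, and a local counting argument on a random majority-stable set for soundness.

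\emph{Completeness.} The lower bound \(\chif(D_2(\mathcal I))\ge3/2\) is Proposition~\ref{prop:lower}, since \(D_2(\mathcal I)\) contains an arc. For the upper bound, fix a satisfying assignment \(a\). For each shift \(j\in\mathbb Z_3\), let \(S_j\) omit the following vertices:
\begin{itemize}
\item for each variable \(v\), exactly the vertex \(x_{v,a(v)+j}\);
\item for each constraint \(e=(u,v,c_e)\), the vertices \(z_{e,i}\) for which neither \(i=a(u)+j\) nor \(i-c_e=a(v)+j\) holds.
\end{itemize}
We then check that \(S_j\) is majority-stable, where every non-sink has outdegree two and so \(k=1\).
\begin{itemize}
\item Each \(x_{v,0}\in S_j\) has at most one outneighbor in \(S_j\), because one vertex of its triple is omitted.
\item A retained \(z_{e,i}\) has at least one omitted outneighbor by the choice of omitted \(z\)'s.
\end{itemize}
Since \(a(u)-a(v)\ne c_e\), the two values \(a(u)+j\) and \(a(v)+j+c_e\) are distinct. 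Hence exactly one \(z_{e,i}\) is omitted per shift, and it is a different one for each of the three shifts. Thus every vertex lies in exactly two of the sets \(S_0,S_1,S_2\). Giving each set weight \(1/2\) covers every vertex with total weight \(3/2\).

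\emph{Soundness.} Let \(K=\chif(D_2(\mathcal I))\). By Lemma~\ref{lem:distribution} there is a random majority-stable set \(S\) with \(\Pp(w\in S)=1/K\) for every vertex \(w\). Let \(J\) be its complement. For each variable, write \(J_v\subseteq\mathbb Z_3\) for the set of omitted labels of its triple; this set is nonempty. For each constraint \(e\), write \(J_e\) for the set of omitted \(z_{e,i}\). Put \(t=3(1-1/K)\). Then every triple and every constraint gadget has expected omitted size exactly \(t\), and since \(\sum_ew_e=1\),
\[
\E\Bigl[\sum_e w_e\bigl(\abs{J_u}+\abs{J_v}+\abs{J_e}\bigr)\Bigr]=3t .
\]
Given \(S\), build an assignment by choosing each \(a(v)\) uniformly and independently from \(J_v\).

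The key local claim is that, for every outcome of \(S\), the constraint \(e=(u,v,c_e)\) is violated with probability at most \(\abs{J_u}+\abs{J_v}+\abs{J_e}-3\). If the excess is at least one, the claim is trivial. Otherwise all three sets are singletons: \(J_u=\{\alpha\}\), \(J_v=\{\beta\}\), and two of the \(z_{e,i}\) lie in \(S\). Each retained \(z_{e,i}\) needs an omitted outneighbor. The omitted \(x_{u,\alpha}\) serves only \(z_{e,\alpha}\), and the omitted \(x_{v,\beta}\) serves only \(z_{e,\beta+c_e}\). Covering two distinct \(z\)'s therefore forces \(\alpha\ne\beta+c_e\), so the forced assignment satisfies \(e\) with probability one.

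Averaging the local claim over \(S\) and the label choices gives
\[
\mu(\mathcal I)\le\E[\text{violated weight}]\le3t-3=6-9/K ,
\]
which rearranges to~\eqref{eq:D2soundness}. The main obstacle is the local claim, specifically verifying that the tight singleton case forces satisfaction; the rest is linearity of expectation, using that each variable triple's marginal is the same, \(t\), regardless of how many constraints share it.
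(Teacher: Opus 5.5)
\textbf{Gap in the local claim.} Your completeness construction is the paper's, and so is your soundness strategy. You take an exact-marginal random set from Lemma~\ref{lem:distribution}, choose a uniform label among the omitted ones, and bound each constraint by the excess $|J_u|+|J_v|+|J_e|-3$. That excess is exactly the paper's sum $\sum_{i}\bigl(|J\cap\{x_{u,i},x_{v,i-c_e},z_{e,i}\}|-1\bigr)$.

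The problem is your case analysis for the local claim. You assert that if the excess is not at least one, then all three sets are singletons. But the only a priori constraints are $|J_u|,|J_v|\ge1$ and $|J_e|\ge0$, so other cases remain:
\begin{itemize}
\item $(|J_u|,|J_v|,|J_e|)=(2,1,0)$ and $(1,2,0)$, both with excess zero. These genuinely occur, for instance with $c_e=0$, $J_u=\{1,2\}$, $J_v=\{0\}$ and all three $z_{e,i}$ retained.
\item $(1,1,0)$, with excess $-1$. Here your claim would say a probability is at most $-1$, so this case must be excluded, not skipped.
\end{itemize}

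\textbf{The repair.} All cases follow from the covering observation you already use in the singleton case. Each retained $z_{e,i}$ needs $i\in J_u$ or $i\in J_v+c_e$. Hence $3-|J_e|\le|J_u\cup(J_v+c_e)|=|J_u|+|J_v|-|J_u\cap(J_v+c_e)|$, that is,
\[
|J_u\cap(J_v+c_e)|\le|J_u|+|J_v|+|J_e|-3 .
\]
In particular the excess is never negative, so the $(1,1,0)$ configuration is impossible.

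Given $S$, the labels $a(u)$ and $a(v)+c_e$ are independent and uniform on $J_u$ and $J_v+c_e$. So the conditional violation probability is $|J_u\cap(J_v+c_e)|/(|J_u||J_v|)\le|J_u\cap(J_v+c_e)|$, which gives your local claim in every case. This is precisely the paper's argument, and with this repair the rest of your averaging goes through unchanged.
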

\begin{proof}
Let \(a\) satisfy every constraint. For each \(t\in\mathbb Z_3\), form a set \(J_t\) of vertices omitted from \(S_t=V(D_2)\setminus J_t\) as follows. From each variable triple omit exactly \(x_{v,a(v)+t}\). For a constraint \(e=(u,v,c_e)\), the two labels \(a(u)+t\) and \(a(v)+c_e+t\) are distinct because \(a\) satisfies \(e\). Let \(h_e(t)\) be the unique third label, and omit exactly \(z_{e,h_e(t)}\) from its auxiliary triple. These are all the vertices of \(J_t\). Figure~\ref{fig:constraint} illustrates the construction for one constraint and \(t=0\).

\begin{figure}[!ht]
\centering
\includegraphics[width=0.60\linewidth]{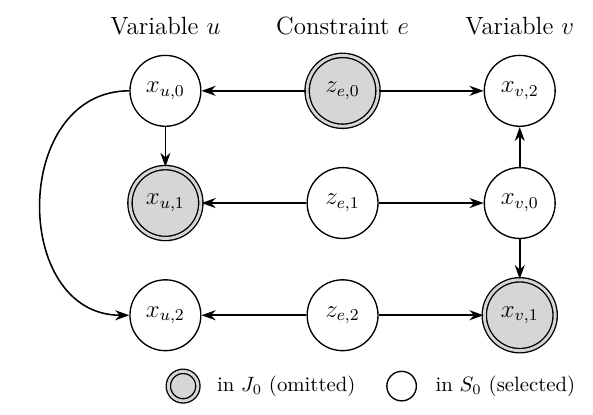}
\caption{The digraph for two variables \(u,v\) and one constraint \(e=(u,v,1)\). The assignment \(a(u)=a(v)=1\) satisfies the constraint. For \(t=0\), the two labels used in the proof are \(a(u)=1\) and \(a(v)+1=2\), so \(h_e(0)=0\). The shaded, double-bordered vertices form \(J_0=\{x_{u,1},x_{v,1},z_{e,0}\}\); the white vertices form \(S_0\). Each white vertex of positive outdegree has exactly one shaded and one white outneighbor.}
\label{fig:constraint}
\end{figure}

We check majority stability at every type of vertex in \(S_t\). If a variable center \(x_{v,0}\) is selected, the omitted vertex of its triple is one of its two leaves. Exactly one of its two outneighbors is therefore selected. For a selected auxiliary vertex \(z_{e,i}\), we have \(i\ne h_e(t)\), so \(i\) equals exactly one of \(a(u)+t\) and \(a(v)+c_e+t\). The first equality says that \(x_{u,i}\) is omitted, and the second says that \(x_{v,i-c_e}\) is omitted. Exactly one of its two outneighbors is consequently selected. All other selected vertices are sinks. Hence \(S_t\) is majority-stable.

As \(t\) runs through \(\mathbb Z_3\), each variable vertex is omitted exactly once. Translation permutes the three labels, so \(h_e(t)=h_e(0)+t\); each auxiliary vertex is also omitted exactly once. Every vertex belongs to exactly two of \(S_0,S_1,S_2\). Giving each of these sets weight \(1/2\) is therefore a fractional coloring of total weight \(3/2\). The digraph contains an arc, so Proposition~\ref{prop:lower} proves equality.

\needspace{5\baselineskip}
For the second assertion, let $K=\chif(D_2(\mathcal I))$. We have \(K\ge3/2\) by Proposition~\ref{prop:lower}. Lemma~\ref{lem:distribution} gives a random majority-stable set \(S\) with every inclusion probability exactly \(1/K\). Put \(J=V(D_2)\setminus S\). Every vertex is omitted with probability \(1-1/K\). Define
\begin{equation}\label{eq:tau}
\tau=3\left(1-\frac1K\right)-1=2-\frac3K.
\end{equation}
For every variable \(v\), let \(L_v=\{i\in\mathbb Z_3:x_{v,i}\in J\}\). This list is nonempty in every outcome: if it were empty, \(S\) would contain \(x_{v,0}\) and both of its outneighbors, contradicting majority stability. Exact marginals give \(\E\abs{L_v}=3(1-1/K)=1+\tau\), hence \(\E(\abs{L_v}-1)=\tau\). Thus \(\tau\) is the expected number of extra available labels beyond the one that every variable must have. In particular, when \(K=3/2\), all lists are singletons almost surely. When \(K\) is close to \(3/2\), the expected number of extra labels is small.

Sample \(J\), and then, conditional on \(J\), choose a label \(a(v)\) uniformly from \(L_v\), independently for different variables. We do not assume that the lists, or the membership indicators of \(J\), are independent. For a constraint \(e=(u,v,c_e)\), write \(L_v+c_e=\{j+c_e:j\in L_v\}\). Because \(u\ne v\), the two final choices \(a(u)\) and \(a(v)+c_e\) are conditionally independent and uniform on \(L_u\) and \(L_v+c_e\). Of the \(\abs{L_u}\abs{L_v}\) equally likely ordered pairs, precisely \(\abs{L_u\cap(L_v+c_e)}\) have equal coordinates. Therefore
\begin{equation}\label{eq:listviolation}
\Pp\bigl(a(u)-a(v)=c_e\mid J\bigr)
=\frac{\abs{L_u\cap(L_v+c_e)}}{\abs{L_u}\abs{L_v}}.
\end{equation}

To bound the numerator, fix a label \(i\in\mathbb Z_3\) and consider the triple
\(\{x_{u,i},x_{v,i-c_e},z_{e,i}\}\).
At least one of these vertices is omitted: otherwise \(z_{e,i}\in S\) would have both outneighbors in \(S\). Thus the number of omitted vertices in this triple, minus one, is nonnegative. Moreover, if \(i\in L_u\cap(L_v+c_e)\), both variable vertices of the triple are omitted, so this number is at least one. Summing these pointwise observations over the three labels gives
\[
\abs{L_u\cap(L_v+c_e)}
\le\sum_{i\in\mathbb Z_3}
\left(\abs{J\cap\{x_{u,i},x_{v,i-c_e},z_{e,i}\}}-1\right).
\]
Each triple has three distinct vertices, each omitted with probability \(1-1/K\). By linearity of expectation, the expected value of each summand on the right is \(3(1-1/K)-1=\tau\). Independence is not needed. Hence
\(\E\abs{L_u\cap(L_v+c_e)}\le3\tau\).
Since both lists are nonempty, their product of sizes is at least one. Taking expectations in~\eqref{eq:listviolation} now gives
\[
\begin{aligned}
\Pp\bigl(a(u)-a(v)=c_e\bigr)
&=\E\left[\frac{\abs{L_u\cap(L_v+c_e)}}{\abs{L_u}\abs{L_v}}\right]\\
&\le\E\abs{L_u\cap(L_v+c_e)}\le3\tau.
\end{aligned}
\]
Multiplying the inequality for constraint \(e\) by its weight \(w_e\), summing over all constraints, and using \(\sum_e w_e=1\), we obtain
\[
\E\left[\sum_{e=(u,v,c_e)}w_e\mathbf1_{\{a(u)-a(v)=c_e\}}\right]
=\sum_{e=(u,v,c_e)}w_e\Pp\bigl(a(u)-a(v)=c_e\bigr)
\le3\tau.
\]
At least one assignment has total violated weight no greater than this expectation. By the definition of \(\mu\), we conclude that \(\mu(\mathcal I)\le3\tau=6-9/K\). Consequently \(9/K\le6-\mu(\mathcal I)\); the right-hand side is positive since \(0\le\mu(\mathcal I)\le1\). Rearranging proves~\eqref{eq:D2soundness}.
\end{proof}

\FloatBarrier
\needspace{6\baselineskip}
\begin{corollary}\label{cor:threehalves}
Deciding whether a digraph $D$ satisfies $\chif(D)=3/2$ is NP-complete, even for acyclic oriented digraphs with outdegrees zero or two in which every directed path has length at most two.
\end{corollary}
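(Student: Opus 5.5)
We prove membership in NP and NP-hardness separately.

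\emph{Hardness.} We reduce from the gap problem of Austrin, O'Donnell, Tan and Wright quoted above. Fix any \(0<\eta<1/12\). Given an instance \(\mathcal I\) of \(\mathrm{2NLin}(\mathbb Z_3)\), construct \(D_2(\mathcal I)\) in polynomial time; it lies in the stated class. By Proposition~\ref{prop:D2}:
\begin{itemize}
\item if \(\mu(\mathcal I)=0\), then \(\chif(D_2(\mathcal I))=3/2\);
\item if \(\mu(\mathcal I)>1/12-\eta>0\), then \(\chif(D_2(\mathcal I))\ge 9/(6-\mu(\mathcal I))>3/2\), because \(9/(6-\mu)>3/2\) exactly when \(\mu>0\).
\end{itemize}
So a procedure deciding whether \(\chif=3/2\) distinguishes the two cases of the gap problem. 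In fact every unsatisfiable instance already gives a value above \(3/2\), so no gap is needed at this step.

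\emph{Membership in NP.} This is the step that needs some care, because the linear program~\eqref{eq:lp} has exponentially many variables. The certificate is a fractional coloring with small support. Consider the LP restricted to the constraints \(\sum_{S\ni v}w_S\ge1\). It has \(|V|\) covering constraints, so if its optimum is \(3/2\), a basic optimal solution is attained with at most \(|V|\) sets of positive weight. The weights are solutions of a linear system with \(0/1\) coefficients of dimension at most \(|V|\), so by Cramer's rule they have polynomial bit length. The certificate lists these sets and their rational weights. The verifier checks:
\begin{itemize}
\item majority stability of each listed set, directly from the definition;
\item the covering inequalities at every vertex;
\item that the total weight is \(3/2\).
\end{itemize}
Together these show \(\chif\le3/2\). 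The matching lower bound comes from Proposition~\ref{prop:lower}, provided the digraph has an arc, which the verifier also checks. A digraph with no arc has value zero or one, so it is rejected.

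For a fully explicit certificate in the restricted class, one could instead use the structure found in the completeness proof: three majority-stable sets covering each vertex twice. However, the general basic-solution argument suffices and applies to all digraphs.

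The main point to get right is bounding the support size and bit length of an optimal solution. This is standard LP theory (Carathéodory's theorem together with Cramer's rule), so I expect it to be routine.
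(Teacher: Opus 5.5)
Your proof is correct and follows the paper's argument. Hardness comes from Proposition~\ref{prop:D2} together with the Austrin--O'Donnell--Tan--Wright gap. Membership in NP comes from a basic optimal solution of~\eqref{eq:lp} with at most $\abs{V(D)}$ positive rational weights of polynomial bit length, combined with Proposition~\ref{prop:lower} after rejecting arcless inputs.

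Your aside that three majority-stable sets covering each vertex twice would also serve as a certificate is not justified for arbitrary digraphs in the class. It is only guaranteed for instances of the form $D_2(\mathcal I)$, via the soundness bound and the completeness construction, so you should drop it; the basic-solution certificate you actually rely on is the right one.
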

\begin{proof}
NP-hardness follows from Proposition~\ref{prop:D2} and the source hardness theorem. For membership in NP, reject edgeless inputs; otherwise Proposition~\ref{prop:lower} gives $\chif(D)\ge3/2$. The covering LP~\eqref{eq:lp} has a basic optimal solution with at most $\abs{V(D)}$ positive weights, all rational and of polynomial bit length. A family of majority-stable sets with these weights and total weight at most $3/2$ is therefore a polynomial certificate: majority stability, vertex coverage and total weight can all be checked in polynomial time.
\end{proof}

\subsection{Multiplicative and additive inapproximability thresholds}\label{sec:gaps}
Both thresholds follow from one hardness gap: dividing its endpoints gives the multiplicative threshold, while subtracting them gives the additive upper-estimation threshold.
\begin{proof}[Proof of Theorem~\ref{thm:complexity}]
Proposition~\ref{prop:D2} maps a satisfiable source instance to value \(a=3/2\), and a NO instance with \(\mu(\mathcal I)>1/12-\eta\) to a value greater than \(108/(71+12\eta)\). Thus, for every fixed \(\beta\) with \(a<\beta<B:=108/71\), choosing a sufficiently small fixed \(\eta>0\) gives NP-hardness of distinguishing value \(a\) from a value greater than \(\beta\), on acyclic oriented digraphs of maximum outdegree two.

A multiplicative approximation of ratio \(\rho<B/a=72/71\) would separate these cases: choose \(\rho a<\beta<B\), so its output is at most \(\rho a\) in the YES case and greater than \(\beta\) in the NO case. Likewise, for an additive upper estimate with error \(\delta<B-a=3/142\), choose \(a+\delta<\beta<B\); its output is then at most \(a+\delta\) in the YES case and greater than \(\beta\) in the NO case. In either case a fixed rational threshold between the two output ranges would solve the NP-hard promise problem in polynomial time.
\end{proof}

\section{Concluding remarks}
The common choice $b=4/9$ allows us to treat general digraphs,
digraphs with a directed cycle factor, and tournaments within the
same sampling framework. We have favored a uniform argument over
optimizing the constants separately. A possible natural refinement would be to define some local sampling  rules
adapted to the length of an odd cycle in the family of vertex-disjoint cycles.

The central extremal question remains whether $\chif(D)\le3$
for every digraph $D$. Such a bound would be sharp: a terminal
strongly connected component isomorphic to a directed triangle
forces $\chif(D)\ge3$, since every majority-stable set contains
at most one of its vertices.

We ask whether every digraph with no such component satisfies
$\chif(D)\le8/3$. This constant would also be sharp: the digraph on
$\{a,b,c_1,c_2,c_3\}$ with arcs $a\to b$, $b\to c_i$ and
$c_i\to a$ for $i=1,2,3$ has fractional majority coloring number
$8/3$. Indeed, the vertex weights $w(a)=1$, $w(b)=2/3$ and
$w(c_i)=1/3$ give total weight at most one to every majority-stable
set, proving the lower bound. A matching fractional coloring
assigns weight $1$ to $\{a\}$, weight $1/3$ to each $\{b,c_i\}$,
and weight $2/3$ to $\{c_1,c_2,c_3\}$.

The constructive and hardness results leave open the optimal
polynomial-time approximation ratio for fractional majority
coloring. The hardness already holds for acyclic oriented digraphs
of maximum outdegree two, making this restricted class a natural
setting in which to study that question.

\clearpage
\appendix
\section{Tournaments: construction and proof}\label{app:tournament}
A directed Hamilton path gives another way to form pairs. For a path \(v_1\to\cdots\to v_n\), let \(M_0\) consist of its arcs \(v_i\to v_{i+1}\) with odd \(i\), and let \(M_1\) consist of those with even \(i\). Choose between these matchings with equal probability. Every vertex other than \(v_n\) is the tail of a matched arc in exactly one choice, whereas \(v_n\) is never a matched tail. Thus a fixed endpoint receives no protection through a designated path arc. We randomize the endpoint and control each vertex's probability of being chosen through its outdegree.

For a nonempty tournament \(T\), let \(K\) be its terminal strong component and put \(Z=\sum_{v\in K}(2d(v)+1)^{-1}\). The procedure is as follows; the proof below justifies the Hamilton paths and the probability estimates.
\begin{enumerate}
\item Choose an endpoint \(z\in K\) with probability \(1/(Z(2d(z)+1))\). For each possible \(z\), fix a directed Hamilton path of \(T\) ending at \(z\), and use that path when \(z\) is chosen.
\item Choose \(M_0\) or \(M_1\) along this path with probability \(1/2\). Each matched pair forms a block of size two, and each unmatched vertex forms its own block of size one.
\item Apply the block sampling and deletion of Section~\ref{sec:sampling} to these blocks, using the outdegrees in $T$, to obtain $S_0$.
\end{enumerate}
Only the first and last vertices of the path can be unmatched. If \(n\) is even, \(M_0\) covers every vertex, whereas \(M_1\) leaves exactly \(v_1\) and \(v_n\) unmatched. If \(n\) is odd and at least three, \(M_0\) leaves only \(v_n\) unmatched, and \(M_1\) leaves only \(v_1\) unmatched. For \(n=1\), both matchings are empty and the sole vertex is a singleton. These unmatched endpoints, and no internal vertices, form the singleton blocks.

Conditional on the chosen path, every vertex has selection probability $b$, as in Section~\ref{sec:sampling}. Every vertex other than $v_n$ has protection probability $1/2$, while $v_n$ is never protected.

We again use \(b=4/9\). These block choices and the simultaneous deletion are exactly those of Lemma~\ref{lem:survival}, now applied to the distribution of path matchings. There is no separate correction on an acyclic remainder.

\begin{proof}[Proof of Theorem~\ref{thm:tournament}]
The empty tournament has value zero, so assume \(T\) is nonempty. Write \(s=\abs K\). The strong components of a tournament are linearly ordered, with every arc between two components pointing in the same direction. Thus every vertex outside \(K\) dominates every vertex in \(K\), and the outdegrees of vertices in \(K\) equal their outdegrees within \(K\).

Every \(z\in K\) can be the last vertex of a directed Hamilton path of \(T\): indeed, the subtournament outside \(K\) has a Hamilton path, possibly empty. If \(K\) is a singleton, append it. Otherwise, there exists a Hamilton cycle in \(K\), break its outgoing arc at \(z\) to obtain a Hamilton path ending at \(z\), and append this path to the one outside \(K\).

There are exactly \(\binom{s}{2}\) arcs inside \(K\), and none leaving \(K\). Consequently \(\sum_{v\in K}(2d(v)+1)=2\binom{s}{2}+s=s^2\). Apply Cauchy--Schwarz to the vectors with coordinates \(\sqrt{2d(v)+1}\) and \(1/\sqrt{2d(v)+1}\). It gives
\[
s^2=\left(\sum_{v\in K}\sqrt{2d(v)+1}\,
\frac1{\sqrt{2d(v)+1}}\right)^2
\le\left(\sum_{v\in K}(2d(v)+1)\right)
\left(\sum_{v\in K}\frac1{2d(v)+1}\right)
=s^2Z.
\]
Since \(s>0\), this proves \(Z\ge1\).

Let \(r_v\) denote the probability that \(v\) is chosen as the endpoint: \(r_v=1/(Z(2d(v)+1))\) for \(v\in K\), and \(r_v=0\) otherwise. Thus \(r_v\le1/(2d(v)+1)\) for every vertex. For a fixed choice of the endpoint, a vertex distinct from it is a designated matched tail with probability \(1/2\), while the endpoint never is. The protection event \(P_v\) has probability \(q_v=(1-r_v)/2\).

For a vertex of outdegree \(d\ge1\), we therefore have
\(q_v\ge(1-1/(2d+1))/2=d/(2d+1)\).
Applying Lemma~\ref{lem:survival} with this protection guarantee gives \(\Pp(v\in S_0)\ge h_d:= L_d\left(\frac{d}{2d+1}\right) \). A sink is never deleted and survives with probability \(b=4/9\).

It remains to minimize \(h_d\). The even-degree comparison in Section~\ref{sec:degreemin} gives \(L_{2m}(q)\ge L_{2m-1}(q)\), and \(L_d(q)\) is nondecreasing in \(q\). Since \(2m/(4m+1)\ge(2m-1)/(4m-1)\),
\[
\begin{aligned}
h_{2m}=L_{2m}\left(\frac{2m}{4m+1}\right)
&\ge L_{2m-1}\left(\frac{2m}{4m+1}\right)\\
&\ge L_{2m-1}\left(\frac{2m-1}{4m-1}\right)=h_{2m-1}.
\end{aligned}
\]
It is therefore enough to compare odd degrees.

Recall \(R_j=\binom{2j}{j}(20/81)^j\) and \(\Phi_d(b)=\Pp(\Bin(d,b)>d/2)\), with \(b=4/9\). Applying~\eqref{eq:Lodd} with \(q=(2j+1)/(4j+3)\) gives
\[
h_{2j+1}=b(1-\Phi_{2j+1}(b))+b^2\frac{2j+1}{4j+3}R_j.
\]
For \(m\ge1\), subtracting the expressions for consecutive odd degrees and using~\eqref{eq:binomialdifference} together with
\(R_m=\frac{40(2m-1)}{81m}R_{m-1}\) gives
\[
h_{2m+1}-h_{2m-1}
=R_{m-1}\frac{16(2m-1)(76m^2-123m-55)}{6561m(4m+3)(4m-1)}.
\]
Every factor except the quadratic is positive. Moreover, $76m^2-123m-55=3+(m-2)(76m+29)$, which is
\(-102\) at \(m=1\), while
it is positive, for $m\ge 2$.
Thus \(h_3<h_1\), and the odd-degree values increase strictly from
degree three onwards. Together with the even-degree comparison,
this proves that \(h_d\ge h_3\) for every \(d\ge1\).

Finally, the definition~\eqref{eq:Ldq} gives, at \(b=4/9\),
\[
h_3=L_3(3/7)
=b\left[\frac47\bigl((1-b)^3+3b(1-b)^2\bigr)
+\frac37(1-b^2)\right]
=\frac{13820}{45927}.
\]
The sink probability \(4/9\) is larger. Every vertex therefore belongs to \(S_0\) with probability at least \(13820/45927\), and Lemma~\ref{lem:distribution} gives \(\chif(T)\le45927/13820<3.324\).
\end{proof}

\section{Constructing an explicit fractional majority coloring}\label{app:algorithm}
We prove Corollary~\ref{cor:valueapprox} by analyzing the randomized algorithm in Section~\ref{sec:positiveapprox}. The empty and edgeless cases are exact, so assume that \(D\) contains an arc. Recall the parameters from~\eqref{eq:batchparameters}: \(n\) is the number of vertices, \(p=140/523\) is the guaranteed inclusion probability in one sample, \(T\) is the target cost, \(\Delta=p-1/T\) is the coverage margin, and \(m\) is the number of samples in a batch.

\paragraph{The sampling distribution.}
A maximal family of disjoint directed cycles can be obtained by repeatedly finding a directed cycle in the remaining induced digraph and deleting its vertices. A cycle search takes polynomial time, and there are at most \(n\) iterations. The remaining digraph is acyclic, and reverse topological greedy coloring gives \(R_0,R_1\). This preprocessing is performed once. Each sample independently chooses the branch of~\eqref{eq:mixture}, then either returns \(R_0\) or \(R_1\), or performs the random matching, block sampling and deletion of Section~\ref{sec:sampling}. With \(b=4/9\), all branch and block probabilities are fixed rationals, and an odd cycle of length \(\ell\) requires a uniform choice among \(\ell\) unmatched vertices. Each sample is majority-stable and has marginal \(p_v:=\Pp(v\in S)\ge p\) at every vertex, by the proof of Theorem~\ref{thm:general}.

\paragraph{Coverage in one batch.}
For a fixed vertex \(v\), the indicators \(\mathbf1_{\{v\in S_i\}}\), \(1\le i\le m\), are independent Bernoulli variables with parameter \(p_v\), because the complete samples use independent randomness. Dependence between different vertices within one sample is irrelevant here. Therefore \(C_v\) has law \(\Bin(m,p_v)\), with \(\E C_v=mp_v\) and
\[
\operatorname{Var}(C_v)=mp_v(1-p_v)
=m\left(\frac14-(p_v-\tfrac12)^2\right)\le\frac m4.
\]
We use Chebyshev's inequality in the form
\[
\Pp\bigl(\abs{Y-\E Y}\ge t\bigr)
\le\frac{\operatorname{Var}(Y)}{t^2}\qquad(t>0),
\]
valid for any random variable \(Y\) with finite variance. If \(C_v<m/T\), then \(\E C_v-C_v>m(p_v-1/T)\ge m\Delta\), since \(p_v\ge p\) and \(\Delta=p-1/T\). Hence the event \(\{C_v<m/T\}\) is contained in \(\{\abs{C_v-\E C_v}\ge m\Delta\}\). Applying the inequality with \(Y=C_v\) and \(t=m\Delta>0\) gives
\[
\Pp(C_v<m/T)
\le\Pp\bigl(\abs{C_v-\E C_v}\ge m\Delta\bigr)
\le\frac{\operatorname{Var}(C_v)}{m^2\Delta^2}
\le\frac1{4m\Delta^2}.
\]
A union bound over the \(n\) vertices now yields
\begin{equation}\label{eq:batchfailure}
\Pp(C_{\min}<m/T)\le\frac{n}{4m\Delta^2}<\frac12,
\end{equation}
where the last inequality follows from \(m>n/(2\Delta^2)\). No independence between the counts for different vertices is needed for this union bound.

\paragraph{The returned fractional coloring.}
On acceptance, \(C_{\min}\ge m/T>0\). Assigning each sampled set weight \(1/C_{\min}\) covers a vertex \(v\) to total weight \(C_v/C_{\min}\ge1\). All sampled sets are majority-stable, so the result is feasible. Repeated sets can be merged without changing any coverage or the total weight. The cost is \(m/C_{\min}\le T\); since \(\chif(D)\ge3/2\), its approximation ratio is at most
\[
\frac{T}{\chif(D)}\le\frac{2T}{3}
=\frac{2U}{3}+\varepsilon=\alpha+\varepsilon.
\]
The output uses at most \(m\) sets. Their weights have denominator \(C_{\min}\le m\), and after merging, their numerators are also at most \(m\). Thus the output has polynomial encoding length whenever \(m\) is polynomial.

\paragraph{Termination and implementation.}
Batches are independent and have the same success probability \(s>1/2\), by~\eqref{eq:batchfailure}. If \(B\) is the number of batches used, then \(\Pp(B>j)=(1-s)^j\), and hence \(\E B=\sum_{j\ge0}(1-s)^j=1/s<2\). The algorithm therefore terminates with probability one and only returns feasible colorings satisfying the claimed bound.

For fixed rational \(\varepsilon>0\), \(\Delta\) is a positive constant and \(m=O(n)\). More generally, \(\Delta=3\varepsilon/[2U(U+3\varepsilon/2)]\), so \(m=O(n(1+\varepsilon^{-2}))\). Matching selection, block sampling, deletion and vertex counting take
expected polynomial time per batch. Since batches use independent
randomness and $\E B<2$, the expected total running time is polynomial.

\end{document}